\numberwithin{equation}{section}
\newcommand{\beq}{\begin{equation}}
\newcommand{\eeq}{\end{equation}}
\newcommand{\beqs}{\begin{eqnarray*}}
\newcommand{\eeqs}{\end{eqnarray*}}
\newcommand{\beqn}{\begin{eqnarray}}
\newcommand{\eeqn}{\end{eqnarray}}
\newcommand{\beqa}{\begin{array}}
\newcommand{\eeqa}{\end{array}}
\def\C{\mathcal C}
\newtheorem{Proposition}{Proposition}[section]
\newtheorem{Theorem}[Proposition]{Theorem}
\newtheorem{Lemma}[Proposition]{Lemma}
\newtheorem{Corollary}[Proposition]{Corollary}
\title{Continuity estimates for the gradient of solutions to the Monge-Amp\`ere equation with natural boundary conditions}
\begin{document}

\address{Huaiyu Jian: Department of Mathematical Sciences, Tsinghua University, Beijing 100084, China.}

\address{Ruixuan Zhu: Department of Mathematical Sciences, Tsinghua University, Beijing 100084, China.}

\email{
hjian@tsinghua.edu.cn; zhurx19@mails.tsinghua.edu.cn}

\thanks{This work was supported by NSFC 12141103.}


\bibliographystyle{plain}

\maketitle

\baselineskip=15.8pt
\parskip=3pt

\centerline{\bf   Huaiyu Jian \ \ \ \  Ruixuan Zhu}

\centerline{Department of Mathematical Sciences, Tsinghua University, Beijing 100084, China}

\vskip20pt

\noindent{\bf Abstract}:
  We study the first derivative estimates for solutions to Monge-Amp\`ere equations in terms of modulus of continuity. As a result, we establish the optimal global log-Lipschitz continuity for the gradient of solutions to the Monge-Amp\`ere equation with natural boundary condition.


 \vskip20pt
 \noindent{\bf Key Words:} The a priori estimate, Monge-Amp\`ere type equation, natural boundary condition.
 \vskip20pt

\noindent {\bf AMS Mathematics Subject Classification}: 35J96, 35J60, 35J75, 35Q82.

\vskip20pt

\noindent {\bf  Running head:}  Continuity estimates for Monge-Amp\`ere equation

\vskip20pt

\baselineskip=15.8pt
\parskip=3pt

\newpage

\centerline{\bf   Continuity estimates for the gradient of solutions}
\centerline{\bf to the Monge-Amp\`ere equation with natural boundary condition}

 \vskip10pt

\centerline{\bf   Huaiyu Jian \ \ \ \  Ruixuan Zhu }

\maketitle

\baselineskip=15.8pt
\parskip=3.0pt

\section{Introduction}
In this paper, we study the global first derivative estimates for solutions to the Monge-Amp\`ere equation with natural boundary condition:
\begin{equation}\label{1}
  \left\{\begin{aligned}
  &\det D^2u =\frac{f}{g(Du)} & & \text{ in }\Omega,\\
  &Du(\Omega)=\Omega^*.
\end{aligned}
\right.\end{equation}
We will establish the {\sl a priori} estimate for $Du$ on $\overline{\Omega}$ in terms of the modulus of continuity of $\log (f/g)$. As a result, we will obtain the log-Lipschitz regularity of the gradient of solutions to problem (1.1) with the optimal growth condition for $\log(f/g)$.

The Monge-Amp\`ere equation with natural boundary condition arises in the problem of optimal transportation. It has extensive applications in vast areas, including image processing, economics, meteorology, and design problems.
See\cite{EV, PF, RR, V03, V09} for example. The existence and uniqueness of convex solutions to this problem were obtained in 1991 by Brenier~\cite{br}. The regularity theories for equation~\eqref{1} were investigated by many authors~\cite{ca1,ca2,ca4,ca3,w1,jw,clw}. When $g=1$ and $f$ is strictly positive and bounded, Caffarelli proved the interior $C^{1,\alpha}$ regularity for the solution to the first equation of (1.1) in~\cite{ca2} and then the global $C^{1,\alpha}$ regularity for problem (1.1) in~\cite{ca4}. If additionally assume that $f$ is continuous, Caffarelli proved the interior $W^{2,p}$ regularity for the first equation of (1.1) in~\cite{ca1}. Moreover, Wang~\cite{w1} proved that if $f$ is Dini continuous, then the solution should be in $C^2$. In 1996, Caffarelli established~\cite{ca3} the global $C^{2,\alpha'}$ regularity of the equation~\eqref{1} with $\alpha'<\alpha$ when $f$ is $C^{\alpha}$ continuous and $ \Omega, \Omega^*$ are uniformly convex $C^2$ bound domains. Recently, Chen, Liu and Wang~\cite{clw} improved this result and proved the global $C^{2,\alpha}$ (Schauder) regularity for $f\in C^{\alpha}$ and $C^{1,1}$ convex domains $\Omega$ and $\Omega^*$.

A simple proof for the interior Schauder estimates for elliptic equations was first raised by Wang~\cite{ws}, who together with Jian~\cite{jw} used the method to derive the interior Schauder estimates
 as well as the interior log-Lipschitz continuity of gradient of solutions to the Monge-Amp\`ere equation $\det D^2u=f$, which is the special case of $g\equiv 1$. The argument was then also used by other authors to investigate the regularity theory for Monge-Amp\`ere equations; see~\cite{clw2, clw, cheng2, cheng1} and page 130 and 136 of~\cite{f17} for example. The global Schauder regularity of the Dirichlet problem was obtained in~\cite{tw08, s13} and the oblique problem was obtained in~\cite{jt22, jt23} for the Monge-Amp\`ere equation $\det D^2u=f$.
 In this paper we are mainly concerned with the global log-Lipschitz estimates of gradient of solutions to problem (1.1). We will extend the method in~\cite{jw} and use the related ideas in
~\cite{clw2, clw} to study the global regularity for problem~\eqref{1} without the Dini continuity of $f$ and $g$. As a result, we obtain the global log-Lipschitz continuity of gradient of solutions to problem (1.1)
 under the optimal growth condition for $\log(f/g)$.

 To state our main result, we need the concept of modulus of continuity and modulus of convexity.
 For a continuous function $F:\,D\to \mathbb{R}$, its modulus of continuity is defined as
$$w_F(r)=\sup\{|F(x)-F(y)|:x,y\in D\text{ and } |x-y|\le r\},$$
 and it is called Dini continuous if
$$\int_0^1 \frac{w_F(r)}{r}\,\mathrm{d}r<+\infty.$$
 The modulus of convexity for a $C^1$ convex function $u$ is defined as
$$C_u(t):=\inf\{u(x)-T_z(x):\; |x-z|>t\},$$
where $t>0$ and $T_z(x) $ is the tangent plane of $u$ at $z$. We will use the following assumption:
\begin{equation}\label{1.2}
\Omega \text{ and } \Omega^* \text{ are } C^{1,1} \text{ bounded and convex domains in } \mathbb{R}^n;
\end{equation}
\begin{equation}\label{1.3}
 f\in C(\bar \Omega), \ \ g\in C(\bar \Omega^*) ;
\end{equation} and there are two positive constants $\lambda$ and $\Lambda$ such that
 \begin{equation}\label{1.4}
 0<\lambda\le f(x),g(y) \le\Lambda<+\infty, \ \ \forall x\in \Omega, \forall y\in \Omega^*.
\end{equation}
Under these assumptions, if $u$ is a convex solution to problem~\eqref{1}, then $u\in C^{1, \alpha}(\bar \Omega)$ by Caffarelli' s work, as mentioned before. We will prove the following theorem which indicates the continuity estimate of $Du$ in dependence of the modulus of convexity for $u$ and the modulus of continuity of $f$ and $g$.

\begin{Theorem}\label{1.1}
 With assumption (1.2)-(1.4), let $u$ be a convex solution to problem~\eqref{1}. Then we have
 \begin{equation}\label{1.5}
   |Du(x)-Du(y)|\le Cd[1+e^{2\theta\psi(d)}]\quad\text{ on }\overline{\Omega}, \ \ \forall x, y\in \bar \Omega .
 \end{equation}
 Here $\theta$ is positive constant, $d=|x-y|$, $\psi(d)=\int_d^1\frac{w_{\log f/g}(r)}{r}dr$.
 The constant $C$ depends on $n$, $\lambda$, $\Lambda$, the modulus of convexity for $u$ and the modulus of continuity of $f$ and $g$.
\end{Theorem}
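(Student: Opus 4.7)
The plan is to combine the iterative perturbation scheme of Jian--Wang~\cite{jw} with the boundary normalization arguments of Chen--Liu--Wang~\cite{clw}. The overall strategy is to fix $x_0,y_0\in\bar\Omega$ at distance $d$ apart, and to run a dyadic iteration on scales $r_k=2^{-k}$ down to $d$, comparing $u$ on each section of scale $r_k$ to a solution $v_k$ of the Monge--Amp\`ere problem with the \emph{same} natural boundary data but a \emph{constant} right-hand side (frozen at some average of $f/g$). At each step the $C^{1,\alpha}$ theory of Caffarelli~\cite{ca2,ca4} (or its boundary version) applied to $v_k$ improves the oscillation of the gradient by a geometric factor, while the perturbation from freezing the coefficient introduces an error controlled by the modulus of continuity $w_{\log f/g}(r_k)$.

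The core of the proof is a single induction lemma. First, normalize: for a boundary or interior point $z$ and scale $r>0$, one uses John's lemma on the section (or the Chen--Liu--Wang normalization for a boundary section) to find an affine map $T_{z,r}$ so that $T_{z,r}(S_r(z))$ has comparable inner and outer balls of radius $1$. In these coordinates consider $\tilde u=(u-L)\circ T^{-1}_{z,r}$, where $L$ is an appropriate affine function. Let $\tilde v$ solve $\det D^2\tilde v=\overline{(f/g)}$ in the normalized section with the natural boundary condition inherited from $\Omega^*$. Caffarelli's global $C^{1,\alpha}$ estimate yields
\[
\|\tilde v-L_{\tilde v}\|_{L^\infty(B_{1/2})}\le \mu\|\tilde u\|_{L^\infty}
\]
for some $\mu\in(0,1)$ and an affine $L_{\tilde v}$. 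Meanwhile, the stability of Monge--Amp\`ere with respect to $L^\infty$ perturbations of the right-hand side (via ABP or the comparison principle applied to $\tilde u-\tilde v$) produces an error estimate proportional to $w_{\log f/g}(r_k)$ times the size of $\tilde u$. Pulling back through $T_{z,r}$ yields the one-step improvement: there is an affine $L_{k+1}$ with
\[
\sup_{S_{r_{k+1}}(z)}|u-L_{k+1}|\le \bigl[\mu+Cw_{\log f/g}(r_k)\bigr]\sup_{S_{r_k}(z)}|u-L_k|.
\]

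Iterating this from the initial scale down to scale $d$ gives
\[
\sup_{S_d(z)}|u-L_N|\le\prod_{k=0}^{N}\!\bigl[\mu+Cw_{\log f/g}(2^{-k})\bigr]\sup_{S_1}|u-L_0|,
\]
and taking logarithms converts the product into $\exp\bigl(C\sum_k w_{\log f/g}(2^{-k})\bigr)\le\exp(2\theta\psi(d))$ after absorbing $\mu$ into a tame geometric factor. Comparing the affine approximants $L_{k}$ at the two points $x_0,y_0$ via a telescoping sum then translates the oscillation decay into the gradient estimate $|Du(x_0)-Du(y_0)|\le C d[1+e^{2\theta\psi(d)}]$.

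The main obstacle is the step at the boundary: one must ensure the normalization preserves good quantitative control on both $\Omega$ and $\Omega^*$ (since natural boundary data are governed by the geometry of $\Omega^*$ through the Legendre transform), and that Caffarelli's global $C^{1,\alpha}$ estimate applies uniformly to the frozen problem on each normalized section. This requires a careful use of the $C^{1,1}$ hypothesis on $\Omega,\Omega^*$ and the modulus of convexity $C_u(t)$, which ensures that sections at arbitrary scales remain comparable to balls after affine rescaling. Once this geometric input is in place, the perturbation and summation steps are essentially those of~\cite{jw}, adapted to the non-constant divisor $g(Du)$, whose dependence on $Du$ is handled by writing the equation as $\det D^2 u\cdot g(Du)=f$ and absorbing $g(Du)$ into $f/g$ using the already-known continuity of $Du$ from~\cite{ca4}.
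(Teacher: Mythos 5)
Your overall strategy — iterate on boundary sections, freeze the right-hand side, compare to a constant-coefficient model, and sum the errors $w_{\log f/g}(2^{-k})$ to get the exponential of $\psi(d)$ — is in the same spirit as the paper, and your reduction of the divisor $g(Du)$ to the case $g\equiv 1$ via Caffarelli's $C^{1,\alpha}$ theory is exactly the paper's first move. The bookkeeping at the end (turning $\prod_k[\mu+Cw(2^{-k})]$ into $\mu^N e^{C'\sum w(2^{-k})}\lesssim d\,e^{2\theta\psi(d)}$) also matches the arithmetic that appears in the paper's treatment of $T_1,\dots,T_4$.

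The genuine gap is in your choice of comparison problem. You propose to let $\tilde v$ solve $\det D^2\tilde v=\overline{(f/g)}$ in the normalized section \emph{with the natural boundary condition inherited from $\Omega^*$}, and then to invoke Caffarelli's global $C^{1,\alpha}$ estimate for $\tilde v$. But the second boundary value problem does not localize this way: the condition $Du(\Omega)=\Omega^*$ is a global constraint relating two full domains, and on a shrinking boundary section there is no intrinsic ``inherited'' target that makes the frozen problem well-posed, let alone uniformly so with bounds independent of the scale. Caffarelli's global regularity theorem (\cite{ca4,ca3}) is a statement about the full problem on $\Omega,\Omega^*$, not about an auxiliary problem on a rescaled section, and the constant $\mu\in(0,1)$ you need would have to be uniform across all scales and base points — this is precisely the difficult part and it is not supplied by citing the global estimate. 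The paper sidesteps this by never posing a second boundary value problem on a section: it reflects the boundary section $D_h^+$ across the supporting hyperplane to form $D_h$, solves a \emph{Dirichlet} problem $\det D^2 w_k=1$, $w_k=4^{-k}$ on $\partial D_{4^{-k}}$, and then proves the comparison $|u-w_k|\le C\mu_k$ by a barrier argument (Lemma~\ref{lem-c}) that uses the $C^{1,1}$ regularity of $\partial\Omega$ and $\partial\Omega^*$ to control $u_n$ on the flat face of the doubled section, together with the even reflection symmetry of $w_k$. That lemma is where the natural boundary condition actually enters, and it is the crucial substitute for the ``uniform $C^{1,\alpha}$ for the frozen problem'' that your outline assumes.

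A secondary difference, not a gap but worth noting: you track affine approximants $L_k$ and the sup-norm decay $\sup_{S_{r_k}}|u-L_k|$, in the Caffarelli--Schauder style, whereas the paper tracks the Hessians $D^2 w_k(0)=P_k^2$ of the model solutions and controls the growth of the maximal eigenvalue $\lambda_k\le e^{\theta\sum\mu_i}$. These two bookkeeping devices can in principle be converted into one another, but the Hessian-tracking version interfaces more cleanly with the reflection/comparison lemma and with the three-term split $|Du(z)-Du(0)|\le I_1+I_2+I_3$, where $I_3$ requires a further comparison at the second base point $z$. If you want to complete your proposal along its own lines, the missing ingredient is a boundary analogue of your one-step improvement lemma that is proved by a comparison-principle argument on a reflected section (not by citing global $C^{1,\alpha}$), and you should check that the improvement factor $\mu$ you obtain is independent of the scale $r_k$ and of the base point on $\partial\Omega$.
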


The next corollary follows immediately from Theorem 1.1, suggesting the global log-Lipschitz regularity for the gradient of solutions to problem (1.1) when the oscillation of the right-hand side function satisfies certain growth conditions.
 \begin{Corollary} \label {1.2}
Let $u$ be a convex solution to the equation~\eqref{1}. With the same assumptions as in Theorem 1.1, if there exist two positive constants $\delta$ and $c$ such that, for any $r<\delta$,
$$w_{\log f/g}(r)\le \frac{c}{|\log r|},$$
then $Du$ is globally log-Lipschitz continuous, i.e.,
$$|Du(x)-Du(y)|\le Cd(1+|\log d|)\text{ for any }x,y\in\overline{\Omega}.$$
Here $d=|x-y|$, the constant $C>0$ depend only on $n$, $\lambda$, $\Lambda$, $\Omega$, $\Omega^*$ and the modulus of continuity of $f$ and $g$.
\end{Corollary}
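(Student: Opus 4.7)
The corollary is a direct application of Theorem~1.1; everything reduces to evaluating the integral $\psi(d)=\int_d^1 w_{\log f/g}(r)/r\,dr$ under the logarithmic decay hypothesis and comparing the resulting bound with the log-Lipschitz target $Cd(1+|\log d|)$.

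I would split $\psi(d)$ at the threshold $\delta$ given by the hypothesis. On $[\delta,1]$, assumption $(1.4)$ forces $|\log(f/g)|\le\log(\Lambda/\lambda)$, so $w_{\log f/g}$ is uniformly bounded and this piece contributes only a constant $C_0=C_0(\delta,\lambda,\Lambda)$. On $[d,\delta]$ I insert the hypothesis $w_{\log f/g}(r)\le c/|\log r|$ and change variables $s=-\log r$ to compute explicitly
\begin{equation*}
\int_d^\delta \frac{w_{\log f/g}(r)}{r}\,dr \;\le\; c\int_d^\delta \frac{dr}{r|\log r|} \;=\; c\bigl(\log|\log d|-\log|\log\delta|\bigr).
\end{equation*}
Summing the two contributions gives $\psi(d)\le c\log|\log d|+C_1$ for $d$ small, hence $e^{2\theta\psi(d)}\le C_2|\log d|^{2c\theta}$, and substituting into $(1.5)$ yields
\begin{equation*}
|Du(x)-Du(y)| \;\le\; Cd\bigl(1+|\log d|^{2c\theta}\bigr).
\end{equation*}

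To finish I would compare $|\log d|^{2c\theta}$ against $1+|\log d|$: in the regime $2c\theta\le 1$ the inequality $|\log d|^{2c\theta}\le 1+|\log d|$ holds uniformly, giving the stated log-Lipschitz bound. For $d$ bounded away from zero the estimate is trivial from the global $C^{1,\alpha}$ regularity of Caffarelli recalled in the introduction, so one only needs to handle the small-$d$ regime where the computation above applies. Since all the analytic weight is carried by Theorem~1.1, the real content of the corollary is this one short integral computation, and no substantive obstacle arises beyond tracking the logarithmic exponent $2c\theta$ and absorbing the constant $C_1$ into $C$.
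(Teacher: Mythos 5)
Your integral computation is correct and is precisely the ``direct computation'' the paper alludes to: splitting $\psi(d)=\int_d^1 w_{\log f/g}(r)/r\,dr$ at $\delta$, the tail over $[\delta,1]$ is bounded by a constant depending on $\lambda,\Lambda,\delta$ via (1.4), and the substitution $s=-\log r$ on $[d,\delta]$ gives $\psi(d)\le c\log|\log d|+C_1$, whence $e^{2\theta\psi(d)}\le C_2|\log d|^{2c\theta}$ and $(1.5)$ becomes $|Du(x)-Du(y)|\le Cd\bigl(1+|\log d|^{2c\theta}\bigr)$.

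However, you have noticed and then underplayed the crux. The bound $Cd\bigl(1+|\log d|^{2c\theta}\bigr)$ controls the target $Cd(1+|\log d|)$ \emph{only} when $2c\theta\le1$. If $2c\theta>1$ then $|\log d|^{2c\theta}/|\log d|\to\infty$ as $d\to0$, and no constant $C$ --- not even one allowed to depend on $c$, $\delta$, and the full modulus of continuity, as the corollary permits --- can repair the estimate; Theorem 1.1 simply does not give log-Lipschitz continuity for large $c$. Declaring this ``no substantive obstacle'' is incorrect: it is a genuine restriction. The constant $\theta$ is a fixed universal constant arising in the iteration of the proof of Theorem 3.1 (from $\lambda_{\max}(T_k)\le 1+\theta\mu_k$) and cannot be taken small, so the corollary as printed implicitly requires $c$ to be small, say $c\le 1/(2\theta)$. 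This reading is consistent with the paper's own remark immediately after Corollary 1.2, which cites the counterexample in Section 4 of Jian--Wang to show that log-Lipschitz continuity may already fail when $w_{\log f/g}(r)\ge C/|\log r|$ with $C$ large. Your write-up should state the smallness of $c$ as part of the hypothesis (or flag that the corollary as stated is slightly imprecise), rather than treating $2c\theta\le1$ as automatic.
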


This estimate in Theorem 1.1 is optimal in the way that if $w_{\log f/g}(r)\ge C/|\log r|$ for some large constant $C$, then the log-Lipschitz continuity of $Du$ may not hold. One can refer to Section 4 in Jian and Wang's paper~\cite{jw} for a counterexample.

 In Section 2 we will present some basic results, including the sub-level sets of solutions to~\eqref{1} and the interior gradient estimates for the Monge-Amp\`ere equation with general right-hand side functions, which are useful in our proof for the main theorem. In the first two subsections of Section 3, we introduce the concept of boundary sections and derive a comparison lemma for the solution to (1.1) and the auxiliary solutions in shrinking boundary sections. Section 3.3 is devoted to the proof of Theorem 3.1, which results in Theorem 1.1 directly.

\section{Basic properties}
 Let $u$ be a differentiable convex function defined on $\Omega$. We extend $u$ to a convex function $\tilde{u}$ in $\mathbb{R}^n$ such that
$$\tilde{u}(x)=\sup\{L(x): L \text{ is any linear support function of } u \text{ in }\Omega\}.$$
We still denote $\tilde{u}$ by $u$.
Let $x_0$ be any point in $\Omega$. Define the sub-level set (or the section of) $u$ at (based point) $x_0$ with (height) $h>0$ as,
$$S_h[u](x_0)=\{y\in\Omega: u(y)<u(x_0)+(y-x_0)\cdot Du(x_0)+h\}.$$
It is the corresponding subset of $\Omega$ where the graph of $u$ is below the plane deduced by moving up the tangential plane of $u$ at point $x_0$ by $h$ height.
 If no confusion happens, we sometimes write $S_h[u](x_0)$ as $S_h$. Then $S_h$ is a convex domain for any $h>0$.
The centered section of $u$ at $x_0$ with height $h$ is the convex set
$$S_h^c[u](x_0)=\{y\in\mathbb{R}^n: u(y)< \ell(y)\},
$$
where $\ell$ is an affine function such that $\ell(x_0)=u(x_0)+h$, and $x_0$ is the center of mass of $S_h^c[u](x_0)$. By~\cite{ca3}, such $\ell$ always exist for any $x_0$ and $h>0$ small.

For two convex body $E$ and $F$, we denote $E\sim F$ if there are two positive constants $C_0$ and $C_1$ such that
$$C_0F\subset E\subset C_1F,$$
where the scaling is defined with respect to some balanced point of $F$. Here we say $x\in F$ is a balanced point of $F$ if there is a constant $C>0$ such that for any $y\in \partial F$, there exists a point $-t(y-x)\in \partial F$ with some $1/C\le t\le C$. It is well-known that for any convex body $E$, one can find a unimodular affine transform $T$ to make $TE\sim B$ for some ball $B$ centered at some balanced point of $E$. Similarly, we say that an affine transform $T$ normalizes a sub-level set $S_h$ of $u$ if $TS_h\sim B$ for some ball $B$. Moreover, if
$$B_{C_0}(y_0)\subset TS_h\subset B_{C_1}(y_0)$$
with $C_1/C_0\le C$ for some universal constant $C$ and some point $y_0$, then $TS_h$ is called to {\sl have a good shape}.

 Under the assumptions of Theorem 1.1, we have the following properties of sections and centered sections of the solution $u$ to problem~\eqref{1}. See~\cite{ca3, clw} for the details.
\begin{enumerate}
 \item Let $x_1$, $x_2$ be two points in $\Omega$, $0<t_1<t_2<1$, and $x_2\in t_1S_h^c[u](x_1)$. Then there exists some constant $s$ depending on $t_1$, $t_2$ such that
 $$S^c_{sh}[u](x_2)\subset t_2 S^c_{h}[u](x_1).$$
 \item The volume of $S_h^c[u](x_0)\approx h^{n/2}$ for any small $h>0$ and any point $x_0\in \overline{\Omega}$.
 \item There exists some constant $b>0$ such that
 $$ S_{bh}^c[u](x_0)\cap \Omega\subset S_{h}[u](x_0)\subset S_{b^{-1}h}^c[u](x_0)\cap \Omega$$
 for any $x_0\in\partial \Omega$ and $h>0$ small enough.
 \item For any small $\epsilon>0$, there is some constant $C>0$ depending on $\epsilon$ and $\Omega,\Omega^*,f$ and $g$, such that
 $$B_{C^{-1}h^{\frac{1}{2}+\epsilon}}(x_0)\subset S^c_h[u](x_0)\subset B_{Ch^{\frac{1}{2}-\epsilon}}(x_0)$$ for any $x_0\in \overline{\Omega}$ and $h>0$ small enough.
\end{enumerate}

Properties (1), (2) and (3) guarantee the geometric property of $S_h$. Property (4) is due to the $W^{2,p}$ regularity of $u$ in equation~\eqref{1} when $f$ and $g$ are continuous.

 In the end of this section, we state
the interior estimate for the gradient of solutions to
\begin{equation}\label{eq-1}
   \det D^2 u=B(x,u(x),Du(x))\quad\text{ in } B_1(0)
\end{equation}
where we assume $B \in C(\mathbb{R}^n\times\mathbb{R}\times \mathbb{R}^n) $ and there exist positive constants $\lambda$ and $\Lambda$ such that for any $C^1$ functions $u$, $B_u(x):=B(x, u(x), Du(x))$ satisfy
\begin{equation}\label{cdn-1}
 0<\lambda\le B_u\le \Lambda<+\infty\quad\text{ on } B_1(0).
\end{equation}
 If $u$ is a strictly convex solution of~\eqref{eq-1}, then $u$ is in $C^{1,\alpha}(B_1(0))$ due to the regularity theory of Monge-Amp\'ere equations (see~\cite{ca2} for a reference).
  Hence the right-hand side of equation~\eqref{eq-1} can be viewed as a continuous positive function on $B_1(0)$. Then by Theorem 1 in~\cite{ca1}, $u\in W_{loc}^{2,p}$ for all $1<p<+\infty$ and thus in $C^{1,\alpha}$ for all $\alpha<1$. Once the modulus of continuity of $B_u$ is controlled, $u$ has some higher kind of regularity than $W^{2,p}$. To be more explicit, we have the following theorem.

\begin{Theorem}\label{2.1}
Let $u$ be a strictly convex solution to the Monge-Amp\`ere equation~\eqref{eq-1}, where $B_u\in \C(B_1(0))$ and satisfies condition~\eqref{cdn-1}. Then we have
\begin{equation}
 |Du(x)-Du(y)|\le Cd[1+e^{2\theta\psi(d)}], \ \ \forall x,y\in B_{1/2}(0) .
\end{equation}
 Here $\theta$ is positive constant, $d=|x-y|$, $\psi(d)=\int_d^1\frac{w_{\log B_u}(r)}{r}dr$. The constant $C$ depends on $n$, $\lambda$, $\Lambda$, the modulus of convexity for $u$ and $w_{B_u}$.
\end{Theorem}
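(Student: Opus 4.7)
\textbf{Proof proposal for Theorem \ref{2.1}.}
The plan is to follow the interior perturbation scheme of Wang \cite{ws} and Jian--Wang \cite{jw}, adapted to a general right-hand side $B_u$. Fix any interior point $x_0\in B_{1/2}(0)$ and a small base height $h_0$. For $k=0,1,2,\ldots$ set $h_k=\sigma^k h_0$ with $\sigma\in(0,1)$ a universal constant to be chosen, and consider the centered sections $S_{h_k}^c[u](x_0)$. By the properties recalled in Section~2 together with John's lemma, there exist unimodular affine maps $T_k$ normalizing these sections so that $T_k(S_{h_k}^c[u](x_0)-x_0)$ has good shape. Define the rescaled solutions
\[
 u_k(z) := \frac{1}{h_k}\bigl(u(x_0+T_k^{-1}z)-u(x_0)-Du(x_0)\cdot T_k^{-1}z\bigr),
\]
which are strictly convex solutions of Monge--Amp\`ere equations with right-hand side pinched between $\lambda$ and $\Lambda$ and having oscillation at most $w_{\log B_u}(r_k)$, where $r_k$ is, up to a factor $h_k^{\pm\epsilon}$, the diameter $\sqrt{h_k}$ of the original section.

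The central step is a one-scale comparison. On the normalized section $\Omega_k:=T_k(S_{h_k}^c[u](x_0)-x_0)$ let $w_k$ solve the frozen problem $\det D^2 w_k=B_u(x_0)$ with $w_k=u_k$ on $\partial\Omega_k$. Caffarelli's interior $C^{2,\alpha}$ estimate yields a universal constant $C^{*}$ with $|D^2 w_k|\le C^{*}$ on a smaller ball, while the Aleksandrov--Bakelman--Pucci estimate applied to $u_k-w_k$, combined with a logarithmic linearization of $\det D^2 u_k/\det D^2 w_k$, gives
\[
 \|u_k-w_k\|_{L^{\infty}(\Omega_k)} \le C\,w_{\log B_u}(r_k).
\]
Subtracting the second-order Taylor polynomial of $w_k$ at the centre and passing to the coordinates adapted to the next section via $T_{k+1}T_k^{-1}$, one obtains a recursive decay of the form
\[
 \omega_{k+1} \le \tau\,\omega_k + C\,w_{\log B_u}(r_k),
\]
where $\omega_k$ measures the oscillation of $u_k$ away from the Taylor approximation of $w_k$ and $\tau<1$ depends only on $n$, $\lambda$, $\Lambda$.

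Iterating this recursion, while simultaneously bookkeeping the compositions $T_{k+1}T_k^{-1}=I+E_k$ with $|E_k|\le C w_{\log B_u}(r_k)$, and using the dyadic comparison
\[
 \sum_{k:\,r_k\ge d} w_{\log B_u}(r_k) \le C\int_d^1\frac{w_{\log B_u}(r)}{r}\,\mathrm{d}r = C\,\psi(d),
\]
produces at each scale $r_k$ an affine approximation $a_k\in\mathbb{R}^n$ of $Du$ with $|Du(z)-a_k|\le C r_k\bigl(1+e^{\theta\psi(r_k)}\bigr)$ on $S_{h_k}^c[u](x_0)$. Choosing $k$ so that $r_k\sim d=|x-y|$ and comparing at the two points $x$ and $y$ yields the asserted bound
\[
 |Du(x)-Du(y)| \le C\,d\bigl[1+e^{2\theta\psi(d)}\bigr].
\]

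The main obstacle will be the precise tracking of the normalizing transforms $T_k$. Each scale contributes a perturbation of size $w_{\log B_u}(r_k)$ which not only enters the frozen comparison but also distorts the shape of the next section, so a factor $\bigl(1+Cw_{\log B_u}(r_k)\bigr)$ accumulates multiplicatively across the iteration and produces the exponential $e^{2\theta\psi(d)}$ rather than the linear $1+\theta\psi(d)$. Maintaining the contraction factor $\tau<1$ uniformly as these eccentricities accumulate is the delicate part, and uses the strict convexity of $u$ together with Caffarelli's $C^{2,\alpha}$ and $W^{2,p}$ theory in an essential way.
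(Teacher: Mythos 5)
Your sketch is a correct re-derivation of the interior estimate and it follows the Jian--Wang mechanism closely: iteration over shrinking normalized sections, freezing the right-hand side at the base point, ABP comparison between $u_k$ and the frozen solution $w_k$, interior $C^4$ bounds for $w_k$, and multiplicative tracking of the normalizing affine maps $T_{k+1}T_k^{-1}=I+E_k$, which is precisely what produces the exponential $e^{2\theta\psi(d)}$. But the paper proves Theorem~\ref{2.1} in a single sentence, and you missed the shortcut that makes the whole iteration unnecessary to rewrite. Since $u$ is a strictly convex solution of~\eqref{eq-1}, Caffarelli's interior regularity gives $u\in C^{1,\alpha}_{\mathrm{loc}}(B_1(0))$, so the composite $B_u(x)=B\bigl(x,u(x),Du(x)\bigr)$ is a \emph{fixed} continuous function on $B_1(0)$, pinched between $\lambda$ and $\Lambda$ by~\eqref{cdn-1}. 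Therefore $u$ is literally a strictly convex solution of $\det D^2 u = f$ with the particular continuous datum $f=B_u$, and Theorem~2 of~\cite{jw} applies verbatim with this choice of $f$, yielding exactly the claimed bound with $\psi(d)=\int_d^1 w_{\log B_u}(r)\,r^{-1}\,dr$. There is no adaptation to carry out: no new comparison lemma, no modification of the section geometry, no re-bookkeeping of the normalizing matrices. Your outline reconstructs the internals of the cited theorem, which is sound as far as it goes, but the one idea worth adding is the reduction step that lets you treat $B_u$ as a given right-hand side and then cite~\cite{jw} directly.
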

\begin{proof}
 It follows directly from Theorem 2 in~\cite{jw} if we take $f(x)$ there as $B_u(x).$
\end{proof}

\section{Global estimates}
 This section is devoted to the proofs of Theorem 1.1 and Corollary 1.2. First,
 Corollary 1.2 follows from Theorem 1.1 by a direct computation. To prove Theorem 1.1,
 it is sufficient to prove the case of $g\equiv 1$, that is Theorem 3.1 below. Then the result in Theorem 1.1 follows directly from
 Theorem 3.1 and the global $C^{1, \alpha}$ regularity for (1.1).

\begin{Theorem}\label{thm3}
Let $u$ be a convex solution to the equation
\begin{equation}\label{1-1}
 \left\{\begin{aligned}
   &\det D^2u =f & & \text{ in }\Omega,\\
   &Du(\Omega)=\Omega^*,
\end{aligned}
\right.
\end{equation}
$f$, $\Omega$ and $\Omega^*$ as in Theorem 1.1. Then we have
\begin{equation}\label{es-1}
 |Du(x)-Du(y)|\le Cd[1+e^{2\theta\psi(d)}], \ \ \forall x, y \in \overline{\Omega} .
\end{equation}
Here $\theta$ is positive constant, $d=|x-y|$, $\psi(d)=\int_d^1\frac{w_{\log f}(r)}{r}dr$. The constant $C$ depends on $n$, $\lambda$, $\Lambda$, the modulus of convexity of $u$ and $w_f$.
\end{Theorem}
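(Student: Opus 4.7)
The plan is to extend the interior iteration behind Theorem~2.1 (due to Jian--Wang) to boundary points by working with a shrinking family of boundary sections of $u$. If both $x,y$ lie in a region at distance $\gtrsim|x-y|$ from $\partial\Omega$, Theorem~2.1 applied in a ball around $x$ immediately delivers \eqref{es-1}. For the remaining case I would pick $x_0\in\partial\Omega$ nearest to $x$ (or $y$) and analyze the oscillations of $Du$ in a dyadic family of boundary sections at $x_0$.

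Fix $x_0\in\partial\Omega$ and consider $S_{h_k}[u](x_0)$ with $h_k=2^{-k}h_0$, $h_0$ so small that properties (1)--(4) of Section~2 apply. By these properties together with Caffarelli's global $C^{1,\alpha}$ theory, there exist unimodular affine maps $T_k$ such that $T_k(S_{h_k}\cap\Omega)$ has good shape with diameter comparable to $h_k^{1/2}$ (up to any prescribed $\epsilon$ loss from property~(4)). Setting
\[
 \tilde u_k(y)=h_k^{-1}\bigl(u(T_k^{-1}y)-\ell_k(T_k^{-1}y)\bigr)
\]
with $\ell_k$ the natural support-type affine normalization, $\tilde u_k$ solves a Monge--Amp\`ere equation with right-hand side $\tilde f_k(y):=f(T_k^{-1}y)$ on a normalized domain of good shape and inherits a rescaled image condition on its gradient.

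The central step is a comparison. Let $v_k$ be the convex solution on the same normalized section with constant right-hand side $f(x_0)$ and boundary data matching $\tilde u_k$ on the relative boundary inside $\Omega$; on the portion of the boundary coming from $\partial\Omega$, $v_k$ satisfies the correspondingly rescaled natural condition. The comparison lemma announced for Section~3.2 should yield
\[
 \|\tilde u_k-v_k\|_{L^\infty}\le C\,w_{\log f}\bigl(c h_k^{1/2}\bigr).
\]
Because $v_k$ has constant right-hand side, Theorem~2.1 applies and yields uniform $C^{1,\alpha}$ control on $Dv_k$ in the concentric half-section. Pulling back through $T_k$ and the $h_k$-scaling converts this into an improvement-of-oscillation estimate for $Du$ at scale $h_k$. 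Iterating from $k=0$ down through $k=1,2,\dots,k_0$, where $h_{k_0}^{1/2}\sim d=|x-y|$, and processing the accumulated errors by a Gronwall-type step, the total contribution is dominated by
\[
 \sum_{k\le k_0} w_{\log f}\bigl(h_k^{1/2}\bigr)\;\lesssim\;\int_d^1\frac{w_{\log f}(r)}{r}\,dr=\psi(d),
\]
producing the multiplicative factor $e^{2\theta\psi(d)}$ in \eqref{es-1}; combined with the linear loss of order $d$ at the final scale, this delivers the stated bound.

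The principal obstacle is the comparison step just outlined. The natural condition $Du(\Omega)=\Omega^*$ is neither Dirichlet nor classically oblique, and under the normalization $T_k$ the target $\Omega^*$ is pulled back by the contragredient transformation $(T_k^\top)^{-1}$, so $v_k$ must be constructed with its gradient image matching a correspondingly rescaled piece of $\Omega^*$. Keeping the constants in the $L^\infty$ comparison bound independent of the scale $k$ requires exploiting both the good shape of the source sections (via property~(3)) and a dual argument on the target, in the spirit of Caffarelli~\cite{ca3} and Chen--Liu--Wang~\cite{clw}. Once this lemma is established with the correct dependence on $w_{\log f}$, the remainder of the proof is a standard boundary adaptation of Jian--Wang's iteration and \eqref{es-1} follows from the geometric summation above.
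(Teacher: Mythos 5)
The high-level strategy you describe --- iterate on a shrinking family of sections anchored at a boundary point, compare $u$ at each scale with a constant-RHS auxiliary Monge--Amp\`ere solution, track the accumulated deviation and sum it into $\psi(d)$ --- is indeed the plan of the paper's Section~3. However, the proposal leaves exactly the hard part unproved, and the sketch you give of it would not quite work as written.

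The key gap is the comparison lemma, and your own text flags it ("Once this lemma is established\ldots"). The paper does not compare on the half-section $S_{h_k}[u](x_0)\cap\Omega$ with a second boundary condition imposed on the $\partial\Omega$ portion. Instead it introduces a genuinely new object: the boundary section $D_h$, obtained by taking $D_h^+=S_h^0\cap\{x_n\geq a_h\}$ and reflecting it across the hyperplane $\{x_n=a_h\}$ to form a symmetric full convex body. The auxiliary solution $w_k$ is then the ordinary Dirichlet solution of $\det D^2 w_k=1$ on $D_{4^{-k}}$ with constant boundary data --- no trace of the optimal transport constraint appears in its construction. The natural boundary condition is used only indirectly, in Lemma~3.5: by symmetry, $w_n=0$ on the reflecting plane $C_2$; the uniform obliqueness (Lemma~3.2, $\nu\cdot\nu^*>\delta$) together with the $C^{1,1}$ regularity of $\partial\Omega^*$ bounds $v_n$ on $C_2$ by $\delta_2^*\gamma^2+\delta_2^{1-\epsilon}\gamma$; and the comparison principle is applied on the half-domain $D_L^+$ with mixed Dirichlet--Neumann barriers $\hat w,\check w$. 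Trying to build $v_k$ with "gradient image matching a correspondingly rescaled piece of $\Omega^*$", as you propose, re-introduces the second boundary condition into the auxiliary problem and makes the comparison step harder, not easier.

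A second consequence is that your claimed bound $\|\tilde u_k-v_k\|_{L^\infty}\leq Cw_{\log f}(ch_k^{1/2})$ omits the boundary-geometry error. Lemma~3.5 actually gives $|v-w|\leq C(\delta_1+\delta_2^*\gamma^2+\delta_2^{1-\epsilon}\gamma)$, and the terms coming from the curvature of $\partial\Omega$ and $\partial\Omega^*$ under rescaling are not negligible: they must be shown to decay like $(4^{-k})^{1/2-O(\epsilon)}$, which requires the $W^{2,p}$-section shape bounds of property~(4) and the normalization Lemma~3.4 aligning inner normals. Without these extra terms in $\mu_k$ your summation would falsely appear to close, but the iteration would silently lose control of the normalizing maps $P_k$ (whose eigenvalue growth $\lambda_k\leq e^{\theta\sum\mu_i}$ is precisely what produces the factor $e^{2\theta\psi(d)}$). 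Finally, you invoke Theorem~2.1 "to $v_k$", but the paper never applies its interior theorem to the auxiliary solutions; it uses Pogorelov-type interior $C^4$ estimates (Lemma~3.8) for the Dirichlet auxiliary on the reflected full domain and a difference lemma (Lemma~3.7). Your reduction of the mixed case $x\in\partial\Omega$, $y\in\Omega$ to the boundary case via the nearest boundary point and the concavity/monotonicity of $h(d)=d(1+e^{2\theta\psi(d)})$ is, by contrast, essentially the paper's Step~2 and fine.
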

From now on, we always assume $f, \Omega, \Omega^*$ and $u$ are the same as in Theorem 3.1.
We will prove the theorem in three steps. First we shall prove~\eqref{es-1} for $x,y\in \partial\Omega$, then for $x\in \partial\Omega$ and $y\in\overline{\Omega}$, and finally for
$x, y\in\overline{\Omega}$. For this purpose, we need to define the boundary sections $D_h$ for $u$, on which we will make iterations.

\subsection{Boundary Sections}
It's known from~\cite{clw} that $u$ is strictly convex and $u\in W^{2,p}(\overline{\Omega})$ for any $p>1$, thus $u$ is in $C^{1,1-\epsilon}(\overline{\Omega})$ for any $\epsilon>0$. The uniform obliqueness of
 problem (3.1) is guaranteed by Lemma 1.1 in~\cite{clw}, stated as below.
\begin{Lemma}\label{lem-ob}
 For any point $x_0\in\partial\Omega$ and $y_0=Du(x_0)$ at $\partial\Omega^*$, suppose that $\nu$ is the unit inner normal vector of $\partial\Omega$ at $x_0$, and $\nu^*$ is the unit inner normal vector of $\partial\Omega^*$ at $y_0$. Then under the same assumption of Theorem 3.1, there exists a universal constant $\delta>0$ such that
$$\nu\cdot \nu^*>\delta>0$$
\end{Lemma}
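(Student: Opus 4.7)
The plan is to prove Lemma \ref{lem-ob} by contradiction, reducing to a formal identity obtained by differentiating the second boundary condition tangentially.

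The formal core runs as follows. Let $\phi^*$ be a $C^{1,1}$ defining function of $\Omega^*$ (negative inside, zero on $\partial\Omega^*$), so $D\phi^*(y_0)$ is an outward normal, parallel to $-\nu^*$. The condition $Du(\Omega)=\Omega^*$ together with the convexity of $u$ reads $\phi^*(Du(x))=0$ for $x\in\partial\Omega$. Differentiating in any tangent direction $\tau\perp\nu$ at $x_0$,
\[
 D\phi^*(y_0)\cdot D^2u(x_0)\,\tau=0,
\]
so $\nu^*\cdot D^2u(x_0)\tau=0$ for every tangent $\tau$, and hence $D^2u(x_0)\nu^*=\alpha\nu$ for some scalar $\alpha$. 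Pairing with $\nu^*$ gives
\[
 \nu^*\cdot D^2u(x_0)\nu^*=\alpha\,(\nu\cdot\nu^*).
\]
If uniform obliqueness failed, i.e.\ $\nu\cdot\nu^*=0$, then $\nu^*\cdot D^2u(x_0)\nu^*=0$, contradicting the positive definiteness of $D^2u(x_0)$ guaranteed by $\det D^2u=f\ge\lambda>0$ together with the convexity of $u$.

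Because $u$ is only in $W^{2,p}(\bar\Omega)$ and not pointwise $C^2$ up to $\partial\Omega$, the tangential differentiation above is not immediately rigorous. I would bypass this by a blow-up argument. Assume the lemma fails and pick $x_k\in\partial\Omega$ with $\nu(x_k)\cdot\nu^*(Du(x_k))\to 0$. After translating both $\Omega$ and $\Omega^*$, rotating, and passing to a subsequence, assume $x_k\to 0$, $y_k=Du(x_k)\to 0$, $\nu(x_k)\to e_n$, $\nu^*(y_k)\to e_1$. For scales $h_k\downarrow 0$, normalize $S_{h_k}[u](x_k)$ by unimodular affine maps $T_k$ so that $T_k S_{h_k}[u](x_k)$ has good shape, using properties (1)--(4) of Section 2 together with Caffarelli's global $C^{1,\alpha}$ regularity. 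The rescaled functions solve Monge--Amp\`ere equations with the same ellipticity bounds on normalized boundary sections; by compactness of convex bodies with uniformly bounded aspect ratio and by the $C^{1,\alpha}$ equicontinuity they converge, along a subsequence, to a limit $u_\infty$ on a half-space-like convex domain, with the inner normals to the source and target boundaries orthogonal in the limit. At this limit the formal differentiation becomes rigorous, yielding the desired contradiction.

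The principal obstacle is the blow-up step itself: one must control the shape of $S_{h_k}[u](x_k)$ after affine normalization and extract a nondegenerate limit while carefully tracking the second boundary condition on the target. This is where the affine invariance of the Monge--Amp\`ere equation, the volume estimate $|S_h^c|\approx h^{n/2}$ from property (2), the inclusion relation of property (3), and the duality between $u$ and its Legendre transform (ensuring analogous sectional estimates for $u^*$ near $y_0$) enter essentially.
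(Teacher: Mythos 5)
The paper does not prove this lemma; it simply cites Lemma~1.1 of Chen--Liu--Wang~\cite{clw}, so there is no internal proof to compare against. Your formal differentiation identity (if $\nu\cdot\nu^*=0$ then $\nu^{*t}D^2u(x_0)\nu^*=0$, whence $D^2u(x_0)\nu^*=0$ by positive semidefiniteness, contradicting $\det D^2u\ge\lambda$) is the standard heuristic and is fine as a heuristic, but your proposed rigorization via blow-up leaves the essential difficulty open, and you more or less say so yourself.

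Two concrete problems. First, there is a circularity: your blow-up step invokes ``properties (1)--(4) of Section~2'' to control the shape of the sections $S_{h_k}[u](x_k)$ and extract a nondegenerate half-space-like limit, but property~(4) rests on global $W^{2,p}$ regularity, and in~\cite{clw} both the $W^{2,p}$ estimate and the non-degeneracy/good-shape estimates for boundary sections are proved \emph{after}, and using, uniform obliqueness. Equivalently, the geometric content of uniform obliqueness is precisely that boundary sections do not become arbitrarily eccentric in a direction tangent to both boundaries; assuming good shape (or a nondegenerate blow-up limit) smuggles in the conclusion. Second, even granting a subsequential limit, you would need to show that the limit problem supports the formal identity at the corner where the source and target half-spaces meet, i.e.\ that $u_\infty$ is $C^2$ up to the boundary of the limiting convex body and that the two boundary pieces pass to the limit as flat half-spaces with the claimed orthogonal normals; none of this is argued, and it is exactly where the actual proofs (Caffarelli~\cite{ca3}, Chen--Liu--Wang~\cite{clw}) replace the naive compactness argument with quantitative comparison/barrier constructions and doubling estimates on sections. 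As written, the proposal identifies the right heuristic but does not supply a proof.
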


Suppose that $0\in\partial\Omega$, $u(0)=0$, $Du(0)=0$, and $u$ is extended to a convex function in $\mathbb{R}^n$. Let $S_h^0[u](0)=\{x\in\mathbb{R}^n:\,u(x)<h\}$.
Suppose also that $\partial\Omega$ and $\partial\Omega^*$ share the same inner normal vector $(0,\cdots,0,1)$ at 0, and they both lie on the upper half space of $\mathbb{R}^n$.
Let $h>0$ be sufficiently small, define
\begin{equation}\label{3.3}D_h^+[u](0)=S_h^0[u](0)\cap \{x_n\ge a_h\},
\end{equation}
where \begin{equation}\label{3.4}
a_h=\inf_a\{S_h^0\cap \{x_n>a\}\subset\Omega\}.
\end{equation}
So we have $a_h\to 0$ when $h\to 0$. Let $D_h^-[u](0)$ be the reflection of $D_h^+[u](0)$ with respect to the plane $\{x_n=a_h\}$. Define $D_h[u](0)$, the boundary section of $u$ at $0$, to be the union of $D_h^+[u](0)$ and $D_h^-[u](0)$, which will be shorten as $D_h$.
Since $u_n:=\frac{\partial u}{\partial x_n}>0$ in $\Omega$, the definition is well-defined, see~\cite{clw2,clw} for a reference. We know that $D_h$ is a convex domain, and $D_h$ shrinks to the point $0$ when $h\to 0$. There are some basic properties for $D_h$, making it behave like centered sections.

\begin{Lemma}\label{prop}
 $D_h$ is a convex domain in $\mathbb{R}^n$, and has the following properties.
\begin{enumerate}
 \item  The volume of $D_h$ satisfies the inequality
 $$1/C h^{n/2}\le |D_h|\le Ch^{n/2}$$
 for some constant $C>0$ (independent of $u$ and $h$), and for any $\epsilon>0$, there is a constant $C_{\epsilon}>0$, depending on $\epsilon$, $f$, $\Omega$ and $\Omega^*$, such that for sufficiently small $h>0$,
 $$a_h\le C_{\epsilon} h^{1-\epsilon}.$$
 \item We have for sufficiently small $h>0$,
 \begin{equation*}
   S_{1/Ch}^c(0)\subset D_h[u](0)\subset S_{Ch}^c(0).
 \end{equation*}
 \item $0$ is a balanced point of $D_h[u](0)$, and for any $\epsilon>0$ small, there is a constant $C_{\epsilon}>0$ such that for sufficiently small $h>0$,
 \begin{equation*}
   B_{1/C_{\epsilon}h^{1/2+\epsilon}}(0)\subset D_h[u](0)\subset B_{C_{\epsilon}h^{1/2-\epsilon}}(0).
 \end{equation*}
 \end{enumerate}
\end{Lemma}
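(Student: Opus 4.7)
The three claims are purely geometric and should follow by combining the section properties (1)--(4) of Section 2, the construction of $D_h$ as the symmetric double of $S_h^0\cap\{x_n\ge a_h\}$ across $\{x_n=a_h\}$, and the $C^{1,1}$ regularity of $\partial\Omega$ near the origin. Throughout, the normalisation $u(0)=0$, $Du(0)=0$ gives $S_h[u](0)=S_h^0\cap\Omega$, which transfers the section statements directly to $S_h^0$.

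I would begin with the bound on $a_h$ in part (1). Properties (3) and (4) of Section 2 yield $S_h[u](0)\subset S_{b^{-1}h}^c(0)\subset B_{C_\eps h^{1/2-\eps}}(0)$, so every point of $S_h^0\cap\Omega$ has horizontal distance at most $C_\eps h^{1/2-\eps}$ from $0$. Writing $\partial\Omega$ locally as the $C^{1,1}$ graph $x_n=\phi(x')$ with $\phi(0)=0$, $D\phi(0)=0$ and $|\phi(x')|\le\kappa|x'|^2$, the defining infimum gives $a_h\le\max\phi\le\kappa C_\eps^2 h^{1-2\eps}$, which is the stated estimate after relabelling $\eps$. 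The volume bound then follows from $|S_h[u](0)|\approx h^{n/2}$ (Properties (2) and (3)): since $\{x_n<a_h\}$ is a slab of thickness $o(h^{1/2-\eps})$ in a convex body of diameter $O(h^{1/2-\eps})$, it removes only a vanishing proportion of $S_h[u](0)$, so $|D_h^+|\approx h^{n/2}$ and doubling via reflection controls $|D_h|$.

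For (2), the upper containment $D_h\subset S_{Ch}^c(0)$ uses $D_h^+\subset S_h[u](0)\subset S_{b^{-1}h}^c(0)$ from Property (3); for the reflected half $D_h^-$, since $a_h=o(h^{1/2-\eps})$ is far smaller than the diameter of $S_{b^{-1}h}^c(0)$, Property (1) absorbs the reflection into a centered section of comparable height. The lower containment $S_{1/Ch}^c(0)\subset D_h$ is obtained analogously from $S_{bh}^c(0)\cap\Omega\subset S_h[u](0)$ together with the symmetry of $D_h$ across $\{x_n=a_h\}$. For (3), feeding the ball estimates of Property (4) into the sandwich of (2) produces the ball inclusions for $D_h$; that $0$ is a balanced point then follows from the exact symmetry of $D_h$ across $\{x_n=a_h\}$ combined with $a_h=o(h^{1/2+\eps})$, since every chord through $0$ has sub-chords of comparable length.

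The delicate point will be making rigorous the claim that the reflected half $D_h^-$ fits inside a centered section of comparable height: centered sections $S_h^c(0)$ are not a priori symmetric in $x_n$, while $D_h^-$ is a literal mirror image. Handling this requires combining the quantitative smallness of $a_h$ from the first step, Property (1) applied to centered sections at nearby basepoints, and the uniform obliqueness of Lemma 3.2 to keep the constants universal.
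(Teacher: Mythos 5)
Your overall route matches the paper's: both proofs lean on the section/centered-section properties imported from Chen--Liu--Wang, the smallness of $a_h$, and the balanced-point property of $S_{Ch}^c(0)$ to handle the reflected half. The difference is that the paper simply cites Lemma~6.1 and display~(6.8) of~\cite{clw} for the $a_h$ and $|D_h|$ bounds and for the sandwich $S_{C^{-1}h}^c(0)\cap\mathbb{R}^n_+\subset S_h^0\cap\mathbb{R}^n_+\subset S_{Ch}^c(0)\cap\mathbb{R}^n_+$, whereas you attempt to re-derive the $a_h$ and volume bounds from the Section~2 properties and the local $C^{1,1}$ graph of $\partial\Omega$. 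Your $a_h$ bound is in the right spirit, and your treatment of (2) and (3) and of the reflection --- via the balanced point of $S_{Ch}^c(0)$ and $a_h=o(h^{1/2+\epsilon})$ --- is precisely what the paper does, which is also the tersest part of the paper's own argument.

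There is, however, one genuine slip in your volume argument. You weaken the bound $a_h\le C_\epsilon h^{1-\epsilon}$ to ``slab thickness $o(h^{1/2-\epsilon})$'' and then assert that a slab whose thickness is $o(\mathrm{diam})$ removes only a vanishing proportion of the volume. That inference is false for general convex bodies, and the weakened exponent is not strong enough here: the slab has volume at most $a_h\cdot(\mathrm{diam})^{n-1}$, and with only $a_h=o(h^{1/2-\epsilon})$ you get $o(h^{n(1/2-\epsilon)})=o(h^{n/2-n\epsilon})$, which is \emph{not} $o(h^{n/2})$. You must keep the sharper estimate $a_h\le C_\epsilon h^{1-\epsilon}$, which gives slab volume $\le C_\epsilon h^{1-\epsilon}\cdot h^{(n-1)(1/2-\epsilon)}=C_\epsilon h^{(n+1)/2-n\epsilon}$; since $(n+1)/2-n\epsilon>n/2$ precisely when $\epsilon<1/(2n)$, the slab is $o(h^{n/2})$ and, comparing with $|S_h[u](0)|\approx h^{n/2}$, one concludes $|D_h^+|\gtrsim h^{n/2}$. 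With that repaired (and taking $\epsilon<1/(2n)$, which also guarantees $a_h=o(h^{1/2+\epsilon})$ needed for the reflection step), your proposal reproduces the paper's lemma.
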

\begin{proof}
The estimates for $a_h$, $|D_h|$ follow from Lemma 6.1 in~\cite{clw}. Since $a_h\le Ch^{1-\epsilon}$ and $$B_{1/Ch^{1/2+\epsilon}}(a_he_n)\subset D_h\subset B_{Ch^{1/2-\epsilon}}(a_he_n)$$
where $e_n=(0,\cdots,0,1)$,
we have (3) for sufficiently small $h$. By definition, $D_h^+\subset S_h^0\cap \mathbb{R}^n_+$, and by (6.8) in~\cite{clw} we have
\begin{equation}\label{fm-3}
 S_{1/Ch}^c(0)\cap \mathbb{R}^n_+\subset S_h^0\cap\mathbb{R}^n_+\subset S_{Ch}^c(0)\cap \mathbb{R}^n_+.
\end{equation}
Since $0$ is the balanced point of $S_{Ch}^c(0)$, and $a_h=o(h^{\frac{1}{2}+\epsilon})$ for any $\epsilon\in (0, \frac{1}{4})$ when $h\to 0$, we have $S_{1/Ch}^c(0)\subset D_h(0)\subset S_{Ch}^c(0)$. Thus $0$ is a balanced point of $D_h$.
\end{proof}

For any point $x_0\in\partial\Omega$, $Du(x_0)\in\partial\Omega^*$, generally $\partial\Omega$ and $\partial\Omega^*$ don't share a common normal vector at $x_0$. Assume that the inner normal vectors of $\partial \Omega$ at $x_0$ and that of $\partial \Omega^*$ at $Du(x_0)$ are $n$ and $n^*$. Due to Lemma~\ref{lem1} below, one can find a linear transform $A$ that makes the two inner normal vectors coincide.

\begin{Lemma}\label{lem1}
Given $\delta>0$, there exists a constant $C>0$ such that, for any unit vector $n$, $n^*\in\mathbb{R}^n$, with $n\cdot n^*>\delta>0$, one can find a unimodular matrix $A$ with $\det A=1$, such that $An$ and $An^*$ both lie on the positive $x_n$ axis, and $1/C\le ||A||,||A^{-1}||\le C$.
\end{Lemma}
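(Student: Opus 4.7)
The hypothesis $n\cdot n^*>\delta$ means that $n$ and $n^*$ span a subspace $P\subset\mathbb{R}^n$ of dimension at most two, on which they make an angle $\theta$ with $\cos\theta=n\cdot n^*>\delta$, bounded strictly away from $\pi/2$. The plan is to construct $A$ as a composition of an orthogonal reduction to $P$ and an explicit unimodular shear acting inside $P$, extended by the identity on the orthogonal complement of $P$.

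The trivial case $n=n^*$ is handled by any rotation $R\in SO(n)$ with $Rn=e_n$. So assume $n$ and $n^*$ are linearly independent. First I would pick an orthonormal basis $\{u_1,u_2\}$ of $P$ with $u_2=n$, extend it to an orthonormal basis of $\mathbb{R}^n$, and let $R\in SO(n)$ be the rotation sending this basis to the standard one with $u_2\mapsto e_n$. Then $Rn=e_n$, $Rn^*=(\sin\theta)\,e_1+(\cos\theta)\,e_n$, $\det R=1$, and $\|R\|=\|R^{-1}\|=1$, so this step costs nothing in the final norm bound.

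Second, inside the $(x_1,x_n)$-plane I would write down an explicit $2\times 2$ unimodular shear $S_0$ whose single nontrivial off-diagonal entry is a rational function of $\tan\theta$, chosen so that, after applying $S_0$ (together with its contragredient, as dictated by the optimal-transport convention used in Section 3), both of the transformed directions are placed on the positive $x_n$-axis. Extending $S_0$ by the identity on the coordinates $e_2,\dots,e_{n-1}$ yields an $n\times n$ unimodular matrix $S$, and I take $A=SR$, so $\det A=\det S\cdot\det R=1$.

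The bound $\|A\|,\|A^{-1}\|\le C$ with $C=C(\delta)$ then follows immediately: $\|A\|\le\|S\|\|R\|=\|S\|$, and the entries of $S$ and $S^{-1}$ are controlled by $|\tan\theta|\le\sqrt{1-\delta^{2}}/\delta$. The main point requiring care is choosing the shear coefficient so that both alignment conditions hold simultaneously; the rest—orthonormal reduction, trivial extension, and the norm estimate—is routine linear algebra. The resulting constant $C$ necessarily blows up as $\delta\to 0$, reflecting the geometric fact that no bounded alignment exists when $n$ and $n^*$ approach orthogonality.
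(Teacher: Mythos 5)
Your proposal is correct and follows essentially the same route as the paper: an orthogonal $Q\in SO(n)$ sending $n$ to $e_n$ and $n^*$ into the $(x_1,x_n)$-plane, followed by a unipotent shear in that plane whose single off-diagonal entry is controlled by $\tan\theta\le\sqrt{1-\delta^2}/\delta$, with $A$ the composition. You also correctly note that, read literally, the statement is impossible when $n\neq n^*$ (a nonsingular linear map cannot send two independent vectors to the same line); the paper's proof in fact arranges $An=e_n$ and $A^{-t}n^*$ to be a positive multiple of $e_n$, which is exactly the contragredient convention you invoke and the one used for $\hat\Omega$, $\hat\Omega^*$ in Section~3.
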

\begin{proof}
One can find an orthogonal matrix $Q$ with $\det Q=1$, such that $Qn=e_n=(0,\cdots,0,1)^t$, and $Q$ maps $n^*$ to a vector on the plane spanned by $x_1$ and $x_n$ axis. Assume that $Qn^*=(a,0,\cdots,0,b)^t$. Since
$n\cdot n^*>\delta >0$ we have $b>0$. Let
$$P=\begin{pmatrix}
 1& &\\
 &\ddots &\\
 \frac{a}{b}& &1
 \end{pmatrix}
 .\quad \text{Then } P^{-t}:=(P^{-1})^t=\begin{pmatrix}
 1& &-\frac{a}{b}\\
 &\ddots &\\
 & &1
\end{pmatrix},
$$
 $Pe_n=e_n$, and $P^{-t}(a,0,\cdots,0,b)^t=be_n$. Since $|a/b|=\tan(\arccos(n\cdot n^*))$,
we have $|a/b|\le \sqrt{(1/\delta^2-1)}$. Note that $\det P=1$, $1/C\le ||P^{-1}||, ||P||\le C$. Take $A=PQ$ to finish the proof.
\end{proof}

With $A$ given by Lemma 3.4, we define
$$
\begin{aligned}
&\hat{u}(\hat{x})=u(A^{-1}\hat{x}+x_0)-u(x_0)-<Du(x_0), A^{-1}\hat{x}>,\\
&\hat{\Omega}=A(\Omega-x_0),\\
&\hat{\Omega}^*=A^{-t}(\Omega^*-Du(x_0)).\\
\end{aligned}
$$
Then $\hat{u}$ solves the problem
$$\left\{
\begin{aligned}
&\det D^2\hat{u}(\hat{x})=f(A^{-1}\hat{x}+x_0) \quad \text{ in }\hat{\Omega},\\
&D\hat{u}(\hat{\Omega})=\hat{\Omega}^*.
\end{aligned}\right.
$$
And $\hat{u}(0)=0$, $D\hat{u}(0)=0$.
Define $D_h^+[\hat{u}](0)$ and $\hat{a}_h$ as (3.3)-(3.4). Let $D_h^-[\hat{u}](0)$ be the reflection of $D_h^+[\hat{u}](0)$ w.r.t. $\{\hat{x}_n=\hat{a}_h\}$,
and $D_h[\hat{u}](0)=D_h^+[\hat{u}](0)\cup D_h^-[\hat{u}](0)$.

We define
$$D_h[u](x_0)=A^{-1}D_h[\hat{u}](0)+x_0.$$
Note that this definition depends on the choice of $A$. While the choice of $A$ is not unique, we will see in the proof that this does not affect the estimations since the norm of $A$ and $A^{-1}$ are globally bounded. Obviously, properties of $D_h[u](x_0)$ inherit directly from all the propositions in Lemma~\ref{prop}, so we will use them without stating them again.

\subsection{A Comparison Lemma}
We shall use the following comparison lemma to compute the difference of $u$ and the standard solution in the boundary sections shrinking to zero.

\begin{Lemma}\label{lem-c}
Assume $\Omega \subset \mathbb{R}^n_{+}$ with $0\in \partial \Omega$. Let $v\in C^{1,1-\epsilon}(\overline{\Omega})$ be a convex solution to $\det D^2v=f$ in $\Omega$ with $f(0)=1$,
$v(0)=0$ and $ Dv(0)=0$. Linearly extend $v$ to $\mathbb{R}^n_{+}$ and evenly reflect it through $\{x_n=0\}$ to extend the definition of $v$ to $\mathbb{R}^n$.
Suppose that $\partial\Omega$ and $\partial\Omega^*$ can be expressed as $x_n=\rho(x')$ and $x_n=\rho^*(x')$ near $0$, with $\rho,\rho^*\in C^{1,1}$, $\rho(0)=\rho^*(0)=0$, and $D_{x'}\rho(0)=D_{x'}\rho^*(0)=0$.
Assume that $B_1\subset D_L:= D_L[v](0)\subset B_n$ for some $L>0$. Let $\gamma=||Dv||_{C^{0,1-\epsilon}(D_L)}$, $\delta_1=\sup_{D_L}|f-1|$, and $\delta_2=[D\rho]_{C^{0,1}(P_{x'}(D_L))}$, $\delta_2^*=[D\rho^*]_{C^{0,1}(P_{x'}(Dv(D_L)))}$.
Here and below, $P_{x'}$ denotes the projection map from $\mathbb{R}^n$ to $\{x_n=0\}$.

If $w\in C^2$ is the convex solution of $\det D^2w=1$ in $D_L$ with $w=L$ on $\partial D_L$,
then we have $$|v-w|\le C(\delta_1+\delta_2^*\gamma^2+\delta_2^{1-\epsilon}\gamma).$$
\end{Lemma}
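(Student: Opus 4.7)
The approach is a barrier/comparison argument in $D_L$ between $v$ and $w$ via Alexandrov's maximum principle for the Monge--Amp\`ere equation. The three terms in the target estimate come from three distinct sources: $\delta_1$ from the density mismatch $|f-1|\le\delta_1$; $\delta_2^{1-\epsilon}\gamma$ from the boundary discrepancy of $v$ on $\partial D_L^-$ caused by $a_L > 0$; and $\delta_2^*\gamma^2$ from the gradient constraint $Dv(\partial\Omega)\subset\partial\Omega^*$ combined with the quadratic flatness $|\rho^*(y')|\le\delta_2^*|y'|^2$ near $y'=0$.

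\emph{Step 1 (boundary estimate).} On $\partial D_L^+\subset\overline\Omega\cap\{x_n\ge a_L\}$ we have $v = L = w$ by the definitions of $S_L^0[v](0)$ and of $w$. For $P = (x', x_n)\in\partial D_L^-$ with reflected partner $P' = (x', 2a_L - x_n)\in\partial D_L^+$, I would use the even extension $v(x', s) = v(x', -s)$ to rewrite $v(P) - L = v(P) - v(P')$ as an integral of $v_n$ over an interval $I\subset\mathbb{R}^n_+$ of length $\le 2a_L\le C\delta_2$, where $a_L\le C\delta_2$ comes from $|\rho(x')|\le\delta_2|x'|^2$ on $P_{x'}(D_L)\subset B_n$. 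When $I$ lies close to $\partial\Omega$ (case $x_n\ge 0$, $I\subset[0, 2a_L]$), combine the boundary trace $|v_n|_{\partial\Omega}| = |\rho^*(y')|\le\delta_2^*\gamma^2$ with the H\"older oscillation $\gamma (2a_L)^{1-\epsilon}$ to bound $|v_n|\le C(\delta_2^*\gamma^2 + \gamma\delta_2^{1-\epsilon})$ on $I$; when $I$ is far from $\partial\Omega$ (case $x_n < 0$), use $|v_n|\le\gamma$ directly. In either case, integrating over $I$ gives
\begin{equation*}
|v - L|\le C(\delta_2^*\gamma^2 + \delta_2^{1-\epsilon}\gamma)\quad\text{on }\partial D_L.
\end{equation*}

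\emph{Step 2 (interior barrier and comparison).} In $D_L\cap\Omega$, $v$ satisfies $\det D^2 v = f$ with $|f-1|\le\delta_1$, and symmetrically on the reflected part. To absorb $\delta_1$, introduce the quadratic $\phi(x) = K\delta_1(|x-x_c|^2 - R_0^2)$ for large $K$ and $x_c, R_0$ adapted to $D_L$; then $\det D^2(w\pm\phi)$ Alexandrov-brackets the Monge--Amp\`ere measure $\mu_v$ of the extended $v$, while $|\phi|\le C\delta_1$ on $\partial D_L$. Combining with Step~1, Alexandrov's comparison principle applied in $D_L$ yields
\begin{equation*}
|v - w|\le C(\delta_1 + \delta_2^*\gamma^2 + \delta_2^{1-\epsilon}\gamma)\quad\text{in } D_L.
\end{equation*}

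\emph{Main obstacle.} The chief subtlety is the Alexandrov-sense identification of $\mu_v$ on the singular sets where the extended $v$ fails to be smooth: the linear-extension slab $\{0\le x_n\le\rho(x')\}\cap D_L$ of $n$-volume $O(\delta_2)$ (where $v$ is affine, so the classical density is zero), and the reflection hyperplane $\{x_n=0\}\cap D_L$ (where $v_n$ jumps by $2v_n(x', 0^+) = 2\rho^*(y') = O(\delta_2^*\gamma^2)$). Both contributions must be absorbed by the barrier $\phi$ without degrading the final bound---most plausibly via an approximation argument (mollifying $f$ and smoothing $\partial\Omega, \partial\Omega^*$, then passing to the limit) or a direct measure-theoretic Alexandrov computation---and this is where the interior estimate must be reconciled with the coefficient $\delta_2^*\gamma^2$ coming from the gradient constraint on $\partial\Omega$.
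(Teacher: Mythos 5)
Your decomposition of the error sources is correct, and your Step~1 estimate of $v_n$ near $\{x_n=a_L\}$ (using $v_n\big|_{\partial\Omega}=\rho^*(D_{x'}v)\le\delta_2^*\gamma^2$ plus the H\"older oscillation $\gamma(2a_L)^{1-\epsilon}$) is exactly the estimate the paper also uses. But the ``main obstacle'' you flag at the end is not a technicality that an approximation argument will remove --- it is a genuine gap, and it is precisely the gap that the paper's proof is structured to avoid.

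Concretely: for the upper barrier you need $\mu_{\hat w}\le\mu_v$ in all of $D_L$, but $\mu_v\equiv 0$ on the linear-extension slab $\{0\le x_n\le\rho(x')\}\cap D_L$, which has thickness $O(\delta_2)$; for the lower barrier you need $\mu_{\check w}\ge\mu_v$, but $\mu_v$ has a singular part of density $\sim 2v_n(x',0^+)\sim\delta_2^*\gamma^2$ concentrated on $\{x_n=0\}$. Neither inequality holds, and a quantitative Alexandrov/ABP estimate only lets you bound $\sup|v-\hat w|$ by the $1/n$-th power of the total mass defect, giving a contribution of order $\delta_2^{1/n}$ and $(\delta_2^*\gamma^2)^{1/n}$ rather than the linear bounds $\delta_2^{1-\epsilon}\gamma$ and $\delta_2^*\gamma^2$ required by the lemma (and needed for the geometric decay $\mu_k\le\delta_k+C_*(4^{-k})^{1/2-10\epsilon}$ in the iteration of Section 3.3). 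Mollifying $f$ or smoothing $\partial\Omega,\partial\Omega^*$ does not help, because the degenerate slab and the gradient jump on $\{x_n=0\}$ are built into the extension/reflection construction, not an artifact of low regularity.

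The paper sidesteps this entirely by never running a comparison on the full doubled section $D_L$. Instead it runs the comparison only on $D_L^+=S_L^0\cap\{x_n\ge a_L\}\subset\overline\Omega$, where the extended $v$ coincides with the genuine solution and $\mu_v=f\,dx$ with no singular or degenerate part. The price is a mixed Dirichlet--Neumann boundary: Dirichlet data $v=L$ on $C_1=\partial D_L^+\setminus\{x_n=a_L\}$, and a Neumann comparison of $v_n$ against $\hat w_n$ (resp.\ $\check w_n$) on the flat face $C_2=\partial D_L^+\cap\{x_n=a_L\}$, handled by the oblique comparison principle (cited from Lemma 2.4 of \cite{jt23}). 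The bound you derived for $v_n$ is used there as the Neumann datum for the lower barrier $\check w=(1+\delta_1)^{1/n}(w-L)+L+C(n)(x_n-n)(\delta_2^*\gamma^2+\delta_2^{1-\epsilon}\gamma)$; the added linear term shifts $\check w_n$ up by the right amount without changing $\det D^2\check w$. Once $|v-w|\le C(\delta_1+\delta_2^*\gamma^2+\delta_2^{1-\epsilon}\gamma)$ is known on $D_L^+$, the estimate is propagated to $D_L^-\cap\mathbb{R}^n_+$ and to $D_L\cap\mathbb{R}^n_-$ not by another PDE comparison but by elementary H\"older/symmetry bounds (the reflected point across $\{x_n=a_L\}$ lands in $D_L^+$, and the offset between the two reflection planes $\{x_n=0\}$ and $\{x_n=a_L\}$ is only $a_L\le n^2\delta_2$, contributing a further $C\gamma\delta_2$). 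So your Step~1 bound on $v$ along $\partial D_L^-$ is not even needed once one works on $D_L^+$; the Neumann condition on $C_2$ does all the work.

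To fix your proposal you would need to replace the full-section Alexandrov argument of Step~2 with the half-section oblique comparison, at which point your Step~1 becomes the $v_n$ estimate on $C_2$ and the rest of the argument is the paper's.
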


\begin{proof}
Let $C_2=\partial D_L^+\cap \{x_n=a_L\}$, $C_1=\partial D_L^+\backslash C_2$, By symmetry we have $w_n=0$ on $C_2$. Let $$\hat{w}=(1-\delta_1)^{1/n}(w-L)+L.$$
Then
\begin{align*}
 \det D^2\hat{w}&=1-\delta_1\le\det D^2 v \quad\text{ in }D_L^+,\\
 \hat{w}&=v \quad\text{ on } C_1,\\
 \hat{w}_n&=0\le v_n\quad\text{ on }C_2.
\end{align*}
Thus by the comparison principle (see, for example, Lemma 2.4 in~\cite{jt23}) we have $v\le \hat{w}$ in $D_L^+$.

For $x\in C_2\cap \partial\Omega$, $x_n=\rho(x')\le n|D\rho(x')|\le n^2\delta_2$. Since $v\in C^{1,1-\epsilon}(\overline{\Omega})$, on $C_2$ we have
\begin{align*}
 v_n(z',z_n)&\le v_n(z',\rho(z'))+\gamma(n^2\delta_2)^{1-\epsilon}\\
 &\le \delta_2^*|D_{x'}v(z',\rho(z'))|^2+\gamma(n^2\delta_2)^{1-\epsilon}\\
 &\le C(n)(\delta_2^*\gamma^2+\delta_2^{1-\epsilon}\gamma)
\end{align*}
by $C^{1,1}$ regularity of $\partial\Omega^*$. Let
$$\check{w}=(1+\delta_1)^{1/n}(w-L)+L+C(n)(x_n-n)(\delta_2^*\gamma^2+\delta_2^{1-\epsilon}\gamma).$$
Then
\begin{align*}
 \det D^2\check{w}&=1+\delta_1\ge \det D^2v\quad\text{ in }D_L^+,\\
 \check{w}&\le L=v\quad\text{ on }C_1,\\
 \check{w}_n&=C(n)(\delta_2^*\gamma^2+\delta_2^{1-\epsilon}\gamma)\ge v_n\quad\text{ on }C_2.
\end{align*}
Again by comparison principle, $v\ge \check{w}$ in $D_L^+$. From the above arguments $|v-w|\le C(\delta_1+\delta_2^*\gamma^2+\delta_2^{1-\epsilon}\gamma)$ in $D_L^+$. In $D_L^-\cap \mathbb{R}^n_{+}$, let $x$ be the symmetric point of $z$ with respect to $C_2$, then
\begin{align*}
 |v(z)-w(z)|&\le |v(z)-v(x)|+C(\delta_1+\delta_2^*\gamma^2+\delta_2^{1-\epsilon}\gamma)+|w(x)-v(x)|\\
 &\le C(\gamma\delta_2+\delta_1+\delta_2^*\gamma^2+\delta_2^{1-\epsilon}\gamma)\\
 &\le C(\delta_1+\delta_2^*\gamma^2+\delta_2^{1-\epsilon}\gamma),
\end{align*}
by $C^{1,1}$ regularity of $\partial\Omega$ and with small $\delta_2$. In $D_L\cap\mathbb{R}^n_{-}$, we have also, by symmetry of $w$ and $v$,
$$|v(z)-w(z)|\le C(\delta_1+\delta_2^*\gamma^2+\delta_2^{1-\epsilon}\gamma).$$
Hence we have proved the inequality in $D_L$.
\end{proof}

\subsection{Proof of Theorem~\ref{thm3}}
At first, we quote three lemmas from~\cite{jw} that will be used throughout our proof. See Lemmas 2.2-2.4~\cite{jw} in for the details.

\begin{Lemma}\label{lem-1}
 Suppose that $w$ is a convex solution of $\det D^2w=1$ in a bounded convex domain $\Omega$, and $w$ is constant on $\partial\Omega$. $0$ is a balanced point of $\Omega$, $D^2w(0)$ is uniformly bounded. Then the domain $\Omega$ has a good shape.
\end{Lemma}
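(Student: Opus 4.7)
The plan is to reduce to a normalized setting via John's affine normalization of $\Omega$, exploit Pogorelov's interior $C^2$-bound for the Monge--Amp\`ere equation on a normalized domain, and then transfer the resulting estimates through the normalizing map to force $\Omega$ itself to have bounded eccentricity around $0$.

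First, John's lemma furnishes an affine map $T(x)=Ax+b$ with $B_1(y_0)\subset T\Omega\subset B_n(y_0)$ for some $y_0$. After subtracting a constant so that $w=0$ on $\partial\Omega$, set
\begin{equation*}
 \tilde w(y)=(\det A)^{2/n}\,w(A^{-1}(y-b)),\qquad \tilde\Omega=T\Omega.
\end{equation*}
A change of variables shows $\det D^2\tilde w=1$ in $\tilde\Omega$ with $\tilde w=0$ on $\partial\tilde\Omega$, and the Hessian transforms as
\begin{equation*}
 D^2w(0)=(\det A)^{-2/n}\,A^{t}\,D^2\tilde w(T(0))\,A.
\end{equation*}
Since $T$ is affine, the balanced property at $0\in\Omega$ passes to the same balanced property at $T(0)\in\tilde\Omega$.

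The next step is to show $T(0)$ sits at universal distance from $\partial\tilde\Omega$. If some sequence of normalized convex bodies $K_k$ (with $B_1\subset K_k\subset B_n$) carried balanced points $p_k$ with $\mathrm{dist}(p_k,\partial K_k)\to 0$, Blaschke selection would produce a limit $(K_\infty,p_\infty)$ in which $p_\infty\in\partial K_\infty$ is still balanced; but at any boundary point the chord in an outward-normal direction has length $0$ on one side and positive length on the other, contradicting the balanced ratio lying in $[1/C,C]$. Hence $\mathrm{dist}(T(0),\partial\tilde\Omega)\ge c(n,C)>0$, and Pogorelov's interior $C^2$-estimate for $\det D^2\tilde w=1$ in the normalized domain $\tilde\Omega$ gives $|D^2\tilde w(T(0))|\le C(n)$. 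Combined with $\det D^2\tilde w=1$, the eigenvalues of $D^2\tilde w(T(0))$ lie in a universal interval $[c_0,C_0]$.

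Feeding this into the Hessian relation together with $\|D^2w(0)\|\le K$ and the lower bound $D^2w(0)\ge K^{-(n-1)}I$ (from $\det D^2w(0)=1$), a direct comparison of quadratic forms yields two-sided bounds on the singular values $\mu_1\le\cdots\le\mu_n$ of $A$:
\begin{equation*}
 \mu_n^2\le\tfrac{K}{c_0}(\det A)^{2/n},\qquad \mu_1^2\ge\tfrac{1}{C_0 K^{n-1}}(\det A)^{2/n}.
\end{equation*}
Dividing gives $\mu_n/\mu_1\le C'(n,K)$, so $A$ has universally bounded condition number. Since $\tilde\Omega$ has aspect ratio at most $n$ and $A^{-1}$ distorts distances by a factor at most $\mu_n/\mu_1$, the original $\Omega=A^{-1}(\tilde\Omega-b)$ has universally bounded aspect ratio around $0$; combined with the balanced property at $0$, this is exactly the good-shape conclusion centered at $0$.

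The main obstacle I foresee is the interior Pogorelov $C^2$-bound at $T(0)$: beyond showing that $T(0)$ is an interior point, one needs a universal lower bound on $-\tilde w(T(0))$ and a universal upper bound on $|D\tilde w(T(0))|$ in order to feed the classical Pogorelov inequality $(-\tilde w)^{n+1/2}|D^2\tilde w|e^{-|D\tilde w|^2/2}\le C_n$. Both bounds follow from barrier comparisons with explicit solutions on balls contained in $\tilde\Omega$ about $T(0)$, exploiting the uniform distance $\mathrm{dist}(T(0),\partial\tilde\Omega)\ge c(n,C)$ just established.
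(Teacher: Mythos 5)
The paper does not actually prove this lemma; it is quoted verbatim from Jian--Wang \cite{jw} (their Lemma~2.2), so there is no in-paper proof to compare against. Assessed on its own merits, your strategy --- John normalization, the observation that a balanced point of a normalized body lies at uniform distance from the boundary, Pogorelov's interior $C^2$ estimate there, and the Hessian change-of-variable formula to bound the condition number of the normalizing map --- is the right one and yields the conclusion. Note that, as in the paper's definition, ``balanced point'' carries an implicit uniform constant $C$, which you correctly treat as given.

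The one step that is genuinely shaky is the Blaschke-selection argument for $\mathrm{dist}(T(0),\partial\tilde\Omega)\ge c$. In the limit, if $p_\infty\in\partial K_\infty$ is a non-smooth boundary point and $w$ lies in the interior of the normal cone, the \emph{entire} chord through $p_\infty$ in direction $\pm w$ can degenerate to the single point $p_\infty$; then both half-chords $a_k(\pm w)$ tend to $0$ and their ratio gives no contradiction. The argument can be repaired (e.g.\ choose $w$ to be a face normal so $-w$ lies in the tangent cone), but a soft compactness argument is unnecessary here, and a direct computation is cleaner: balance at $p$ gives $\tfrac{1}{C}(p-K)\subset K-p$, and since $K\supset B_1(y_0)$ one has simultaneously
\begin{equation*}
K-p\supset B_{1/C}\!\left(\tfrac{p-y_0}{C}\right)\quad\text{and}\quad K-p\supset B_1(y_0-p);
\end{equation*}
the convex hull of these two balls contains $B_{2/(C+1)}(0)$, so $\mathrm{dist}(p,\partial K)\ge \tfrac{2}{C+1}$ with an explicit constant. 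A cosmetic point: the Pogorelov inequality you quote should carry $e^{+|D\tilde w|^2/2}$ on the left (or $e^{-|D\tilde w|^2/2}$ on the right); this does not affect your use of it, since you have already bounded $|D\tilde w(T(0))|$ by a barrier argument and only need the resulting two-sided eigenvalue bound on $D^2\tilde w(T(0))$.
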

\begin{Lemma}\label{lem-2}
 Let $w_i$, $i=1,2$ be two convex solution of $\det D^2w_i=1$ in a bounded convex domain $\Omega$ in $\mathbb{R}^n$, with $B_1\subset\Omega\subset B_n$. And $||w_i||_{C^4(\overline{\Omega})}\le C_0$, $|w_1-w_2|\le \delta$ in $\Omega$ for some $\delta>0$. Then
 $$|D^k(w_1-w_2)|\le C\delta\quad\text{ in }\Omega'$$
 for and $\Omega' \subset\subset \Omega$ and $k=1,2,3$, where the constant $C$ depends on $C_0$ and $d(\Omega',\partial\Omega)$.
\end{Lemma}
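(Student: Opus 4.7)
The plan is to linearize around both solutions and reduce the statement to a standard interior regularity estimate for a linear uniformly elliptic equation with smooth coefficients. The key observation is that, although each $w_i$ solves the fully nonlinear equation $\det D^2 w_i = 1$, the difference $v := w_1 - w_2$ satisfies a linear equation obtained by integrating the derivative of the determinant along the segment from $D^2 w_2$ to $D^2 w_1$.

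Explicitly, I would write
$$0 = \det D^2 w_1 - \det D^2 w_2 = \int_0^1 \frac{d}{dt}\det\bigl(D^2 w_2 + t D^2 v\bigr)\, dt = a^{ij}(x)\,\partial_{ij} v(x),$$
where
$$a^{ij}(x) := \int_0^1 \bigl[\mathrm{cof}\bigl(D^2 w_2(x) + t D^2 v(x)\bigr)\bigr]^{ij}\, dt.$$
Since each $D^2 w_i$ is symmetric positive definite with $\det D^2 w_i = 1$ and $\|D^2 w_i\|_{L^\infty} \le C_0$, every eigenvalue of $D^2 w_i$ lies in $[C_0^{-(n-1)},\,C_0]$. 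By convexity of the cone of positive definite matrices, the interpolant $D^2 w_2 + t D^2 v$ inherits two-sided eigenvalue bounds depending only on $C_0$ for $t \in [0,1]$, so the matrix $(a^{ij})$ is uniformly elliptic with constants depending only on $n$ and $C_0$. Moreover, since $w_i \in C^4(\overline{\Omega})$ with $\|w_i\|_{C^4} \le C_0$, the entries $a^{ij}$ belong to $C^2(\overline{\Omega})$ with norm controlled by $C_0$.

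With these properties established, I would apply the classical interior Schauder estimates for linear uniformly elliptic equations with H\"older-regular coefficients. Since $a^{ij}\in C^{1,\alpha}(\overline{\Omega})$ with bounds depending only on $C_0$, the bounded solution $v$ of $a^{ij}\partial_{ij} v = 0$ satisfies
$$\|v\|_{C^{3,\alpha}(\Omega')} \le C\,\|v\|_{L^\infty(\Omega)} \le C\delta,$$
for any $\Omega' \subset\subset \Omega$, where $C$ depends only on $n$, $C_0$, and $d(\Omega',\partial\Omega)$. This immediately yields $|D^k v| \le C\delta$ on $\Omega'$ for $k = 1,2,3$, finishing the proof.

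I do not anticipate a real obstacle: the argument is essentially a bookkeeping exercise showing that the linearized equation for $v$ inherits uniform ellipticity and $C^2$ coefficient regularity from the $C^4$ control of the $w_i$. The only point that demands some care is confirming that the ellipticity constants depend only on $C_0$ (and not on the individual $w_i$); this relies on the fact that $\det D^2 w_i = 1$, together with the upper eigenvalue bound coming from $\|D^2 w_i\|\le C_0$, forces the minimum eigenvalue of each $D^2w_i$ to be uniformly bounded from below.
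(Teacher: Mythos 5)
Your proposal is correct and follows the standard linearization argument: write the difference $v=w_1-w_2$ as a solution of the linear uniformly elliptic equation $a^{ij}\partial_{ij}v=0$ with coefficients built from the cofactor matrices along the segment from $D^2w_2$ to $D^2w_1$, check that the determinant constraint together with the $C^4$ bound yields two-sided eigenvalue control (hence uniform ellipticity and $C^{1,\alpha}$ coefficient bounds), and conclude via interior Schauder estimates. This is precisely the approach in Jian--Wang, which is the source the paper cites for this lemma, so your argument matches the paper's intended proof.
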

\begin{Lemma}\label{lem-3}
   With the same $\Omega$ as in Lemma~\ref{lem-2}, let $w$ be a convex solution of $\det D^2w=1$ in $\Omega$ which is constant on $\partial\Omega$. Then for $\Omega' \subset\subset \Omega$,
   $||w||_{C^4(\Omega')}\le C$ for some constant $C$ depending on $n$ and $d(\Omega',\partial\Omega)$.
\end{Lemma}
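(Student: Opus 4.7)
The plan is to derive the interior $C^4$ bound by combining Pogorelov's interior $C^2$ estimate with the Evans--Krylov theorem and a Schauder bootstrap. Since $w$ is constant on $\partial\Omega$ and $\det D^2 w = 1$ is bounded between positive constants, Caffarelli's strict convexity theorem guarantees that $w$ is strictly convex in $\Omega$. The gradient bound $|Dw|\le C$ on compactly contained subdomains then follows from convexity together with the $L^\infty$ bound on $w$, which is in turn controlled by the normalization $B_1\subset\Omega\subset B_n$ inherited from Lemma~\ref{lem-2}.

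First I would apply the classical Pogorelov auxiliary function, of the form $\Phi(x,\xi) = (c-w(x))^{\beta}\, w_{\xi\xi}(x)\, e^{|Dw|^2/2}$ (for a suitable exponent $\beta$ and constant $c$ chosen so that $c-w>0$ on $\overline{\Omega}$), and examine $\log\Phi$ at an interior maximum point. Using $\det D^2 w = 1$ and the fact that $\log\det$ is concave, the standard Pogorelov computation yields a pointwise bound $w_{\xi\xi}(x)\le C_2$ for every unit direction $\xi$ and every $x\in\Omega'$, with $C_2$ depending only on $n$ and $d(\Omega',\partial\Omega)$. Combined with $\det D^2 w = 1$, this forces the eigenvalues of $D^2 w$ to satisfy $c_0\le \lambda_i(D^2w)\le C_2$ throughout $\Omega'$, so the equation becomes uniformly elliptic there.

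Next, on $\Omega'$ the equation $\log\det D^2 w = 0$ is uniformly elliptic and concave in the Hessian, so the Evans--Krylov theorem gives an interior estimate $\|w\|_{C^{2,\alpha}(\Omega'')}\le C$ on any $\Omega''\subset\subset\Omega'$, with $\alpha\in(0,1)$ universal. Then I would bootstrap by linear Schauder theory: differentiating $\det D^2 w = 1$ in a direction $e_k$ produces $W^{ij}\partial_{ij}(\partial_k w)=0$, where $W^{ij}$ is the cofactor matrix of $D^2 w$, whose entries now lie in $C^{0,\alpha}$. Applying interior Schauder to $\partial_k w$ upgrades it to $C^{2,\alpha}$, hence $w\in C^{3,\alpha}$, and one further differentiation followed by another Schauder application delivers $w\in C^{4,\alpha}$ on $\Omega'$, which gives the claimed $C^4$ bound with the correct dependence.

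The only genuinely nonlinear step is the first one, so I expect the main obstacle to be arranging the Pogorelov computation so that the constant $C_2$ depends only on $n$ and $d(\Omega',\partial\Omega)$. This hinges on the constant boundary data (which lets the cutoff factor $(c-w)$ vanish on $\partial\Omega$ and control the auxiliary function at the boundary) together with the normalized geometry already built into the statement; once the $C^2$ bound is in place, the remainder of the argument is standard linear elliptic regularity.
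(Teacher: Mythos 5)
The paper does not prove this lemma; it is quoted directly from \cite{jw} (Lemma 2.4 there), so there is no in-paper argument to compare against. Your route---Pogorelov's interior $C^2$ bound, Evans--Krylov for $C^{2,\alpha}$, and a Schauder bootstrap on the linearized equation $W^{ij}\partial_{ij}(\partial_k w)=0$ (and its once-more-differentiated analogue) to reach $C^4$---is the standard argument for this statement and is correct in substance; it is almost certainly the one underlying the cited lemma.

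One point deserves slightly more care, as you yourself anticipate in your last paragraph. The Pogorelov estimate in the form with the $e^{|Dw|^2/2}$ weight is usually stated with a constant that depends on $\|Dw\|_{L^\infty(\Omega)}$, and this quantity is not a priori finite on a general bounded convex domain, since the gradient of a convex function can blow up at $\partial\Omega$. The standard fix, which preserves the claimed dependence only on $n$ and $d(\Omega',\partial\Omega)$, is to run the maximum-principle argument not on all of $\Omega$ but on a sub-level set $\{w<c-\delta\}$ containing $\Omega'$ (equivalently, replace $(c-w)^{\beta}$ by a cutoff supported away from $\partial\Omega$): on that set $|Dw|\le \mathrm{osc}_{\Omega}\,w/\mathrm{dist}(\cdot,\partial\Omega)$ is controlled purely in terms of $n$ and $\delta$, via Aleksandrov's estimate $(c-w(x))^n\le C(n)\,\mathrm{dist}(x,\partial\Omega)$ and the normalization $B_1\subset\Omega\subset B_n$; and one may take $\delta$ comparable to $d(\Omega',\partial\Omega)$, because the concavity of $c-w$ together with the comparison bound $c-\min_\Omega w\ge 1/2$ (from comparing $w$ with $c+\tfrac12(|x|^2-1)$ on $B_1\subset\Omega$) gives $c-w\ge c(n)\,d(\Omega',\partial\Omega)$ on $\Omega'$. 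With that adjustment the constant in your $C^2$ bound depends only on $n$ and $d(\Omega',\partial\Omega)$, and the Evans--Krylov and Schauder steps are routine linear elliptic regularity.
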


Under the conditions of $f$, $\Omega$ and $\Omega^*$, problem~\eqref{1-1} has uniform strict obliqueness property as Lemma~\ref{lem-ob}, and its solution $u\in W^{2,p}(\overline{\Omega})$ for any $p>1$. Thus for any fixed $\epsilon>0$, $u\in C^{1,1-\epsilon}(\overline{\Omega})$. By Lemma~\ref{prop}, there exist a universal constant $h_0$ and $C$, with $h_0$ sufficiently small, such that for any $x\in \partial\Omega$,
$$ B_{Ch^{1/2+\epsilon}}(x)\subset D_h[u](x)\subset B_{Ch^{1/2-\epsilon}}(x), \ \  \forall h\in(0, h_0). $$
By the properties of the sections in Section 2 and Lemma~\ref{prop}, we have
$$D_{c_1h}[u](x)\subset D_{c_2h}[u](x)\subset D_h[u](x)$$
for some sufficiently small $c_2>c_1>0$. Without loss of generality, we assume that $c_1=\frac{1}{4}$ and $c_2=\frac{3}{4}$.
By property (1) and (3) in Section 2, we can also find a constant $\delta<1$ such that for any $z\in D_{1/4h}[u](x)$,
\begin{equation}\label{es-xz}
 D_{h\delta}[u](z)\subset D_{h}[u](x)\subset CD_{h\delta}[u](z).
\end{equation}

\subsubsection{Step 1} We will prove the inequality~\eqref{es-1} for $x,y\in\partial\Omega$ in this step. We shall follow the method of proof in~\cite{jw}, but establish all the estimates on iterative boundary sections $D_h$
instead of interior sections there.

For any $x_0\in \partial\Omega$ and $D_{h_0}[u](x_0)$ defined as before, by Lemma 3.4 and John's lemma (see, for example,~\cite{f17, g01}),
we may take an affine transform $A$ which normalizes $D_{h_0}[u](x_0)$ and also makes the inner normal vectors of $\Omega$ at $x_0$ and $\Omega^*$ at $Du(x_0)$ coincide. The norms of $A$ and $A^{-1}$ are bounded from up and below by Lemma~\ref{lem1}. Let
$$\tilde{u}(\tilde{x})=\frac{(\det A)^{2/n}}{f(x_0)^{1/n}}(u(A^{-1}\tilde{x}+x_0)-u(x_0)-<Du(x_0),A^{-1}\tilde{x}>).$$
And $\tilde{u}$ sovles the problem:
$$\left\{\begin{aligned}
 \det D^2\tilde{u}&=\frac{f(A^{-1}\tilde{x}+x_0)}{f(x_0)}=:\tilde{f}\quad\text{ in } \tilde{\Omega},\\
 D\tilde{u}(\tilde{\Omega})&=\tilde{\Omega}^*.
\end{aligned}\right.$$
Then $0\in \partial\tilde{\Omega}$, $\tilde{u}(0)=0$, $D\tilde{u}(0)=0$. The inner normal vectors of $\partial\Omega$ and $\partial\Omega^*$ at $0$ are both $e_n$. We may choose $A$ such that $\frac{(\det A)^{2/n}}{f(x_0)^{1/n}}=h_0^{-1}$, then $D_1[\tilde{u}](0)=D_{h_0}[u](x_0)\sim B_1(0)$.

We shall prove that for $\tilde{z}\in D_{4^{-4}}[\tilde{u}](0)\cap\partial\tilde{\Omega}$, but $\tilde{z}\neq 0$,
\begin{equation}\label{fm0}
 |D\tilde{u}(\tilde{z})|\le C\tilde{d}(1+e^{2\theta\int_{\tilde{d}}^1 \frac{w_{\tilde{f}}(r)}{r}\,dr}),
\end{equation}
where $\tilde{d}=|\tilde{z}|$ and $\theta$ is a positive constant.
Once we have proved this inequality, for $z\in B_{r_0}(x_0)\cap\partial\Omega$ where $r_0$ is a universal constant and sufficiently small, we have
$$|A^{-t}\frac{(\det A)^{2/n}}{f(x_0)^{1/n}}(Du(z)-Du(x_0))|\le C|z-x_0|(1+e^{2\theta\int_{|A(z-x_0)|}^1\frac{w_{\tilde{f}}(r)}{r}dr}).$$
The constant on the left side is universally bounded. For the right-hand side, we have $w_{\tilde{f}}(r)\le Cw_{\log f}(Cr)$ as estimated
on page 619 in~\cite{jw}. Thus~\eqref{es-1} is proved for $x,y\in\partial\Omega$.

Now let's prove inequality~\eqref{fm0}. For the sake of brevity we omit all tildes. Consider a sequence of boundary sections of $u$ at $0$, $D_{4^{-k}}=D_{4^{-k}}[u](0)$ for $k\ge 0$. Let $w_k$ be the convex solution of
the problem:
$$\left\{
 \begin{aligned}
   &\det D^2w_k=1 \quad\text{ in }D_{4^{-k}},\\
   &w_k=4^{-k} \quad\text{ on }\partial D_{4^{-k}}.
 \end{aligned}
\right.$$
Let $P_k=\sqrt{D^2w_k(0)}$, i.e., $D^2w_k(0)=P_k^2,$ and $P_k$ is a symmetric matrix and its eigenvalues, denoted by $\lambda(P_k)$, equals to $\lambda(D^2w_k(0))^{1/2}$.

Since $D^2(w_k\circ P_k^{-1})(0)=I$, by Lemma~\ref{lem-1} we see that $P_kD_{4^{-k}}$ has a good shape, with $2^kP_kD_{4^{-k}}\sim B_1(0)$.
Take unimodular matrix $A_k$ as in Lemma~\ref{lem1} such that the inner normal vectors of $P_k\Omega$ and $P_k^{-t}\Omega^*$ at 0 coincide, then $1/C\le ||A_k||,||A_k^{-1}||\le C$.
Let $Q_k=A_k^{-t}P_k$, then $Q_kD_{4^{-k}}$ still has a good shape. Thus we have $2^kQ_kD_{4^{-k}} \sim B_1(0)$.

Let $\hat{w}_0^k=4^kw_k\circ(2^{-k}Q_k^{-1})$, $\hat{w}_1^k=4^kw_{k+1}\circ(2^{-k}Q_k^{-1})$, $\hat{u}^k=4^ku\circ(2^{-k}Q_k^{-1})$. Then
$$\begin{aligned}
&\left\{\begin{aligned}
   &\det D^2\hat{w}_0^k =1 & & \text{ in }D_1[\hat{u}^k](0),\\
   &\hat{w}_0^k =1 & & \text{ on }\partial D_1[\hat{u}^k](0).
\end{aligned}\right.\\
&\left\{\begin{aligned}
   &\det D^2\hat{w}_1^k =1 & & \text{ in }D_{1/4}[\hat{u}^k](0),\\
   &\hat{w}^k_1 =1/4 & & \text{ on } \partial D_{1/4}[\hat{u}^k](0).
\end{aligned}\right.\\
&\left\{\begin{aligned}
   &\det D^2\hat{u}^k =f\circ(2^{-k}Q_k^{-1}) & & \text{ in }D_1[\hat{u}^k](0),\\
   &\hat{u}^k =\hat{u}^k & & \text{ on }\partial D_1[\hat{u}^k](0).
\end{aligned}\right.
\end{aligned}$$
Since $D_1[\hat{u}^k](0)\sim B_1$, and $d(D_{3/4}[\hat{u}^k],\partial D_1[\hat{u}^k])\ge C$, we have $||\hat{w}_0^k||_{C^4(D_{3/4}[\hat{u}^k])}\le C$ by Lemma~\ref{lem-3}. Similarly we have $||\hat{w}_1^k||_{C^4(D_{3/16}[\hat{u}^k])}\le C$.

Let $\Omega_k=2^kQ_k\Omega$, $\Omega_k^*=2^{-k}Q_k^{-t}\Omega^*$. Their boundaries near 0 have local representatives $x_n=\rho_k(x')$ and $x_n=\rho_k^*(x')$, respectively. Let
$$\left\{\begin{aligned}
 &\delta_k=\sup_{z\in\partial\Omega}\sup_{D_{4^{-k}}[u](z)\cap\Omega}|f-f(z)|,\\
 &\epsilon_k=[D\rho_k]_{C^{0,1}(P_{x'}(D_1[\hat{u}^k](0)))},\\
 &\epsilon^*_k=[D\rho^*_0]_{C^{0,1}(P_{x'}(D\hat{u}^k(D_1[\hat{u}^k](0))))},\\
 &\gamma_k=||D\hat{u}^k||_{C^{0,1-\epsilon}(D_1[\hat{u}^k](0))}.
 \end{aligned}
 \right.
$$
Denote $\mu_k=\delta_k+\epsilon_k^*\gamma_k^2+\epsilon_k^{1-\epsilon}\gamma_k$. Applying Lemma~\ref{lem-c} to $\hat{u}^k$ on $D_1[\hat{u}^k]$ and $D_{1/4}[\hat{u}^k](0)$, we have
$$\begin{aligned}
 |\hat{w}_0^k-\hat{u}^k|&\le C\mu_k\quad\text{ in } D_1[\hat{u}^k](0),\\
 |\hat{w}^k_1-\hat{u}^k|&\le C\mu_k\quad\text{ in } D_{1/4}[\hat{u}^k](0).
\end{aligned}$$
Hence we have
$$|\hat{w}_0^k-\hat{w}^k_1|\le C\mu_k\quad\text{ in } D_{1/4}[\hat{u}^k](0).$$

Thus by Lemma~\ref{lem-2}
$$\begin{aligned}
|D\hat{w}^k_0-D\hat{w}^k_1|&\le C\mu_k\quad\text{ in } D_{1/16}[\hat{u}^k](0),\\
|D^2\hat{w}^k_0-D^2\hat{w}^k_1|&\le C\mu_k\quad\text{ in } D_{1/16}[\hat{u}^k](0),
\end{aligned}$$
and therefore in $P_kD_{4^{-k-2}}[u](0)$, by boundedness of $||A_k||$ and $||A_k^{-1}||$ we have
\begin{gather}\label{-}
|D(w_k\circ P_k^{-1})-D(w_{k+1}\circ P_k^{-1})|\le C2^{-k}\mu_k,\\
\label{--}
|D^2(w_k\circ P_k^{-1})-D^2(w_{k+1}\circ P_k^{-1})|\le C\mu_k.
\end{gather}

Now for $z\in D_{4^{-4}}[u](0)\cap\partial{\Omega}$ but $z\neq 0$, we have a $k$ such that
 $z\in \partial\Omega\cap (D_{4^{-k-3}}[u](0)\backslash D_{4^{-k-4}}[u](0))$. Notice that
$$\arraycolsep=1.5pt
\begin{array}{rcccccc}
   |Du(z)-Du(0)|\le |Dw_k(z)&-&Dw_k(0)|+|Dw_k(0)&-&Du(0)|+|Dw_k(z)&-&Du(z)|.\\
   & I_1 & & I_2 & & I_3 &
\end{array}$$
Assume that $\lambda_k$ is the maximum eigenvalue of $P_k$. Then using~\eqref{--} to iterate, we have
$$\begin{aligned}
I_1 &\le |Dw_{k-1}(z)-Dw_{k-1}(0)|+C\lambda_{k-1}^2\mu_{k-1}|z|\\
&\le |Dw_0(z)-Dw_0(0)|+C\sum_{i=0}^{k-1}\lambda_i^2\mu_i|z|\\
&\le C|z|(1+\sum_{i=0}^{k}\lambda_i^2\mu_i).
\end{aligned}
$$
Since $Du(0)=\lim_{k\to\infty} Du_k(0)$, by~\eqref{-} we have
\begin{equation}\label{cse}
I_2 \le \sum_{i=k}^{+\infty}|Dw_i(0)-Dw_{i+1}(0)|\le \sum_{i=k}^{+\infty} C2^{-i}\lambda_i\mu_i.
\end{equation}

Before we estimate $I_3$, we need to estimate $\lambda_i$ and $\mu_i$. Since
 $$|D^2(w_{k+1}\circ P_k^{-1})(0)-I|=|D^2(w_{k+1}\circ P_k^{-1})(0)-D^2(w_{k}\circ P_k^{-1})(0)|\le C\mu_k,$$
 and $D^2(w_{k+1}\circ P_{k+1}^{-1})(0)=I$, if we write $P_{k+1}=T_kP_k$
 for some matrix $T_k$, then we have $|T_k^tT_k-I|\le C\mu_k$. Thus
  $\lambda_{max}(T_k)\le 1+\theta \mu_k$. Hence by the iteration $ P_k=T_{k-1}T_{k-2}\cdots T_0P_0$, we see that
\begin{equation}\label{es-lm}
 \lambda_k\le \prod_{i=0}^k(1+\theta \mu_i)\le e^{\theta\sum_{i=0}^k\mu_i}.
\end{equation}
Observe   that
 $$B_{C_1h^{1/2+\epsilon}}\subset D_h\subset B_{C_2h^{1/2-\epsilon}},$$
where the constant $C_1$, $C_2$ depend on the $C^{1,1-\epsilon}$ norm of $u$ and the convexity of $u$ and the domains. Since $2^kQ_kD_{4^{-k}}[u](0)\sim B_1$,
we can bound the norm of $Q_k$ and $Q_k^{-1}$ by $(4^{-k})^{1-\epsilon}.$ Then by direct computation,
$$\begin{aligned}
\epsilon_k&=[D\rho_k]_{C^{0,1}(P_{x'}(2^kQ_kD_{4^{-k}}))}\\
&\le 2^{-k}||Q_k||\cdot ||Q_k^{-1}||^2\cdot [D\rho_0]_{C^{0,1}(P_{x'}D_ 1)}\\
&\le C(4^{-k})^{1/2-3\epsilon}\epsilon_0.
\end{aligned}
$$
Similarly we obtain the estimates for $\epsilon_k^*$ and $\gamma_k$:
$$\left\{\begin{aligned}
\epsilon_k &\le C(4^{-k})^{\frac{1}{2}-3\epsilon}\epsilon_0,\\
\epsilon_k^* &\le C(4^{-k})^{\frac{1}{2}-3\epsilon}\epsilon_0^*,\\
\gamma_k &\le C(4^{-k})^{-\frac{5}{2}\epsilon+\epsilon^2}\gamma_0.
\end{aligned}\right.$$
This gives that
\begin{equation}\label{es-mu}
 \mu_j\le\delta_j+C_*(4^{-j})^{1/2-10\epsilon},
\end{equation}
where $C_*$ is universal constant which is big enough such that $C(\epsilon_0^*\gamma_0^2+\epsilon_0^{1-\epsilon}\gamma_0)\le C_*$. putting (3.12) in~\eqref{es-lm}, we obtain the estimate for $\lambda_k$.

Now we estimate $I_3=|Dw_k(z)-Du(z)|$. For $z\in D_{4^{-k-3}}[u](0)$, there is a universal constant $l_0$ and $C>0$ such that $$D_{4^{-k-l_0}}[u](z)\subset D_{4^{-k}}[u](0)\subset CD_{4^{-k-l_0}}[u](z).$$
We have
\begin{equation}\label{ztri}
 |Dw_k(z)-Du(z)|\le |Dw_k(z)-Dw_{k+l_0,z}(z)|+|Dw_{k+l_0,z}(z)-Du(z)|,
\end{equation}
where $w_{j,z}$ is defined by
$$\left\{\begin{aligned}
&\det D^2 w_{j,z}=f(z)\quad \text{ in }D_{4^{-j}}[u](z),\\
&w_{j,z}=4^{-j}+u(z)+<Du(z), -z>\quad \text{ on }\partial D_{4^{-j}}[u](z).
\end{aligned}\right.$$
Let
$$\tilde{u}(\tilde{x})=u(A^{-1}\tilde{x}+z)-u(z)-<Du(z),A^{-1}\tilde{x}>,$$
and
$$\tilde{w}_{j,z}(\tilde{x})=w_{j,z}(A^{-1}\tilde{x}+z)-u(z)-<Du(z),A^{-1}\tilde{x}>,$$
where $A$ is constructed as in Lemma~\ref{lem1}. Then $Dw_{j,z}(z)=A^{t}D\tilde{w}_{j,z}(0)+Du(z)$. Now $\tilde{w}_{j,z}$ satisfy
$$\left\{\begin{aligned}
&\det D^2 \tilde{w}_{j,z}=f(z)\quad \text{ in }D_{4^{-j}}[\tilde{u}](0),\\
&\tilde{w}_{j,z}=4^{-j}\quad \text{ on }\partial D_{4^{-j}}[\tilde{u}](0),
\end{aligned}\right.$$
and after this transformation, the equation and domains still satisfy the conditions of Lemma~\ref{lem-c}. Thus we have as before
$$|D\tilde{w}_{j,z}(0)-D\tilde{u}(0)|\le C\sum_{s=j}^{+\infty}2^{-s}\tilde{\lambda}_s\tilde{\mu}_s.$$
Here $\tilde{\lambda}_s$ and $\tilde{\mu}_s$ are the corresponding constants at point $z$ as in inequality~\eqref{cse}. Notice that the norms of $A$ and $A^{-1}$ have global lower and upper bounds. This means that we have
\begin{equation}\label{es-z1}
 |Dw_{j,z}(z)-Du(z)|\le C\sum_{s=j}^{+\infty}2^{-s}\tilde{\lambda}_s\tilde{\mu}_s.
\end{equation}
From~\eqref{es-lm} and~\eqref{es-mu}, we have
\begin{equation}\label{es-z2}
 \begin{split}
   \tilde{\lambda}_s&\le e^{\theta\sum_{j=0}^s\tilde{\mu}_j},\\
   \tilde{\mu}_j&\le \delta_j+C_*(4^{-j})^{1/2-10\epsilon}.
 \end{split}
\end{equation}
Hence the term $|Dw_{k+l_0,z}(z)-Du(z)|$ in~\eqref{ztri} obeys the same estimation in $I_2$.

To estimate $I_3$, it suffices to establish a bound for $|Dw_k(z)-Dw_{k+l_0,z}(z)|$. Assume that $P_{k+l_0,z}$ normalizes $D_{4^{-k-l_0}}[u](z)$.
Let $Q=2^{k+l_0}P_{k+l_0,z}$. Then $QD_{4^{-k-l_0}}[u](z)\sim B_1(z)$, and also from~\eqref{es-xz}, $QD_{4^{-k}}[u](0)\sim B_1(z)$.

Let
$$\hat{w}_k=(\det Q)^{2/n}w_k\circ Q^{-1},\,\hat{w}_{k+l_0,z}=(\det Q)^{2/n}w_{k+l_0,z}\circ Q^{-1},\,\hat{u}=(\det Q)^{2/n}u\circ Q^{-1}.$$
Note that $\det Q=2^{n(k+l_0)}$. Then we have by Lemma~\ref{lem-c},
$$\begin{gathered}
 |\hat{u}-\hat{w}_k|\le C\hat{\mu}_k,\\
 |\hat{u}-\hat{w}_{k+l_0,z}|\le C\hat{\mu}_{k+l_0,z},
\end{gathered}$$
and thus
$$|\hat{w}_k-\hat{w}_{k+l_0,z}|\le C(\hat{\mu}_k+\hat{\mu}_{k+l_0,z})\le C(\delta_k+C_*(4^{-k})^{1/2-10\epsilon}).$$

Thus by Lemma~\ref{lem-2}
$$|D\hat{w}_k(z)-D\hat{w}_{k+l_0,z}(z)|\le C(\delta_k+C_*(4^{-k})^{1/2-10\epsilon}),$$
or equivalently
\begin{equation}\label{es-z3}
 |Dw_k(z)-Dw_{k+l_0,z}(z)|\le C(\delta_k+C_*(4^{-k})^{1/2-10\epsilon})\lambda_{k+l_0,z}2^{-k-l_0},
\end{equation}
where $\lambda_{k+l_0,z}$ is the maximum eigenvalue of $P_{k+l_0,z}$, which is also bounded as~\eqref{es-z2}.

From~\eqref{ztri},~\eqref{es-z1},~\eqref{es-z2} and~\eqref{es-z3}, we see that $I_3$ is bounded by $C(I_1+I_2)$. Then
$$|Du(z)-Du(0)|\le C\sum_{i=k}^{+\infty}2^{-i}\lambda_i\mu_i+C|z|(1+\sum_{i=0}^k\lambda_i^2\mu_i),$$
which, together with (3.11), yields
$$|Du(z)-Du(0)|\le C\sum_{i=k}^{+\infty}2^{-i}\mu_ie^{\theta\sum_{j=0}^i\mu_j}+C|z|(1+\sum_{i=0}^k\mu_ie^{2\theta\sum_{j=0}^i\mu_j}).
$$
Substituting (3.12) into the above inequality, we obtain
$$\begin{aligned}
|Du(z)-Du(0)|\le & e^{\theta C_*}\sum_{i=k}^{+\infty}2^{-i}\delta_ie^{\theta\sum_{j=0}^i\delta_j}+C_*e^{\theta C_*}\sum_{i=k}^{+\infty}2^{-2i}4^{10\epsilon i}e^{\theta\sum_{j=0}^i\delta_j}\\
& +C|z|(1+e^{2\theta C_*}\sum_{i=0}^{k}\delta_ie^{2\theta\sum_{j=0}^i\delta_j}+C_*e^{2\theta C_*}\sum_{i=0}^{k}2^{-i}4^{10\epsilon i}e^{2\theta\sum_{j=0}^i\delta_j}).
\end{aligned}
$$
Let $$\nu(t)=\sup_{x\in\partial\Omega}\sup_{D_{t^2}[u](x)\cap\Omega}|f-f(x)|.$$
Suppose that $(1+C_*)e^{\theta C_*}\le C_0$. By the definition of $\delta_j$ we obtain that
\begin{equation}\label{3.17}
\arraycolsep=0pt
\begin{array}{rcccccc}
|Du(z)&-&Du(0)|\le C_0\int_0^{2^{-k}}\nu(t)&e&^{\theta\int_t^1\frac{\nu(s)}{s}ds}dt+C_0\int^{2^{-k}}_0&t&^{1-20\epsilon}e^{\theta\int_t^1\frac{\nu(s)}{s}ds}dt\\
& & &T_1& &T_2& \\
&+&C|z|(1+C_0\int_{2^{-k}}^1\frac{\nu(t)}{t}&e&^{2\theta\int_t^1\frac{\nu(s)}{s}ds}dt+C_0\int_{2^{-k}}^1&t&^{-20\epsilon}e^{2\theta\int_t^1\frac{\nu(s)}{s}ds}dt).\\
& & & T_3& &T_4&
\end{array}
\end{equation}
 Let $\varphi(t)=-\int_{t}^1\frac{\nu(s)}{s}ds$, then $\varphi'(t)=\frac{\nu(t)}{t}$. We claim that $\varphi(t)=o(|\ln t|)$, $t\to 0$.

In fact, $a(t):=\int_t^1\frac{\nu(s)}{s}ds$ is a monotonic function. We only need to prove the claim for $t=l^m$ for all sufficiently big positive integers $m$, where $l$ is a given constant in $(0,1)$.
 Since $a(\tau t)-a(t)\le \nu(t)|\ln \tau |$ for any $\tau\in (0, 1)$, we see that
$a(l^m)-a(l^{m-1})\le \nu(l^{m-1})|\ln l|$, and add them up to obtain
$$a(l^m)\le (\nu(l^{m-1})+\cdots+\nu(l^0))|\ln l|.$$
The right-hand side of the inequality is less than
$(\frac{k}{m}\nu(1)+\frac{m-k}{m}\nu(l^{k-1}))|\ln l^m|$ for any $m\geq k$.
Take $k=[\sqrt m]$, then the multiplier before $|\ln l^m|$ is less than $\frac{1}{\sqrt m-1}\nu(1)+\nu(l^{\sqrt m-2})$, which goes to 0 when $m\to+\infty$. Hence proves the claim.

To estimate $T_1$ in (3.17), take $r=2^{-k}$ and note that $te^{-\theta\varphi(t)}=e^{\ln t-\theta \varphi(t)}\le e^{\ln t+1/2|\ln t|}=\sqrt{t}\to 0$ when $t\to 0$ by our claim. Using integrate by parts we get
$$T_1=\int_0^r t\varphi'(t)e^{-\theta\varphi(t)}dt=\frac{r}{-\theta}e^{-\theta\varphi(r)}+\frac{1}{\theta}\int_0^re^{-\theta\varphi(t)}dt.
$$
For the second term $T_2$ in (3.17), we have
$$T_2=\frac{1}{2-20\epsilon}r^{2-20\epsilon}e^{-\theta\varphi(r)}+\int_0^r\frac{\theta}{2-20\epsilon}t^{2-20\epsilon}\varphi'(t)e^{-\theta\varphi(t)}dt.
$$
Thus $T_2\le CT_1$. Similarly we have
$$T_3=\int_r^1\varphi'(t)e^{-2\theta\varphi(t)}dt=\frac{1}{2\theta}(e^{-2\theta\varphi(r)}-1)\le Ce^{-2\theta\varphi(r)}.
$$
and
$$T_4=\frac{1}{1-20\epsilon}(1-e^{-2\theta\varphi(r)}r^{1-20\epsilon})+\frac{2\theta}{1-20\epsilon}\int_r^1t^{1-20\epsilon}\varphi'(t)e^{-2\theta\varphi(t)}dt,
$$
after which we have $T_4\le C+CT_3$.

Gathering the estimates for $T_1, \cdots,T_4$ together and recalling $Du(0)=0$, we obtain that
$$|Du(z)|\le C_0(\frac{r}{-\theta}e^{-\theta\varphi(r)}+\frac{1}{\theta}\int_0^re^{-\theta\varphi(t)}dt)+C_0|z|(1+e^{-2\theta\varphi(r)}).
$$
Since $\int_0^r e^{-\theta\varphi(t)}dt=O(re^{-\theta\varphi(r)})$, $r=2^{-k}$, we have
$$|Du(z)|\le C_02^{-k}(1+e^{-\theta\varphi(2^{-k})})+C_0|z|(1+e^{-2\theta\varphi(2^{-k})}).
$$

Repeating the arguments from (3.10) to (3.12) in~\cite{jw}, we have
$$|Du(z)|\le C_0|z|(1+e^{-2\theta\varphi(2^{-k})}).
$$
Note that $B_{C_1t^{1/2+\epsilon}}\subset D_{t^2}[u](z)\subset B_{C_2t^{1/2-\epsilon}}$,
then
$$\begin{aligned}
|\varphi(2^{-k})|&=\int_{2^{-k}}^1\frac{\nu(s)}{s}ds\\
&\le \int_{(\frac{d}{C_0})^{\frac{1}{1-\epsilon}}}^1\frac{w_f(C_0t^{1-\epsilon})}{C_0t^{1-\epsilon}}d\frac{C_0t^{1-\epsilon}}{1-\epsilon}\\
&=\frac{1}{1-\epsilon}\int_d^1\frac{w_f(t)}{t}dt.\\
\end{aligned}
$$
In this way, we have proved the formula~\eqref{fm0}. Since we omitted the $\tilde{ }$ over $u$, Theorem~\ref{thm3} is proved for the case $x,y\in \partial\Omega$.

\subsubsection{Step 2}
Now we turn to the proof of inequality~\eqref{es-1} for $x\in\partial\Omega$ and $y\in \Omega$.

W.l.o.g. $0\in\partial \Omega$, $u(0)=0$, $Du(0)=0$. And the inner unit normal vectors of $\partial\Omega$ and $\partial\Omega^*$ at $0$ are both $e_n$.

First assume that $z\in \Omega\cap(D_{4^{-k-3}}[u](0)\backslash D_{4^{-k-4}}[u](0))$, and $dist(z,\partial\Omega)=|z-0|$. Then by the convexity of $\partial\Omega$, $z$ must locate on the $x_n$ axis. Or otherwise suppose that the line that is parallel to $e_n$ and passes through $z$ intersects $\partial\Omega$ by $z'\neq 0$, then $dist(z,\partial\Omega)\le|z-z'|<|z|$, which results in a contradiction.

Similar to Step 1, we have
$$|Du(z)-Du(0)|\le |Dw_k(z)-Dw_k(0)|+|Dw_k(0)-Du(0)|+|Dw_k(z)-Du(z)|.$$
The first two terms on the right-hand side submit to the same bounds of $I_1$ and $I_2$ as before.

To estimate $I_3$ in this case, first observe that there exists a positive integer $l_0$ such that
\begin{equation}\label{cc}
 S_{4^{-k-l_0}}[u](z)\subset D_{4^{-k}}[u](0)\subset CS_{4^{-k-l_0}}[u](z),
\end{equation}
in which case $S_{4^{-k-l_0}}[u](z)=\{x\in \mathbb{R}^n:u(x)\le u(z)+<Du(z),x-z>+4^{-k-l_0}\}$ is contained in $\Omega$.
To see this, note that by Lemma 2.2 in~\cite{ca3}, there are some constants $C$ and $\delta>0$ that for any $h$ sufficiently small,
$$S^c_{h\delta}[u](z)\subset S^c_{h}[u](0)\subset CS^c_{h\delta}[u](z)$$
for $z\in S^c_{1/4h}[u](0)$. Combining this formula with (2) in Proposition~\ref{prop} and Lemma 2.2 in~\cite{clw}, we obtain~\eqref{cc}.
To show that $S_{4^{-k-l_0}}[u](z)\subset \Omega$, take big enough $l_0$ and small $h_0$ in the beginning of this section, then $S_{4^{-k-l_0}}[u](z)\subset B_{C4^{(-k-l_0)(1/2-\epsilon)}}(z)\subset B_{r_0}(z')$, where $z'$ is on $x_n$ axis and $B_{r_0}(z')$ is contained in $\Omega$ for such $h_0$.

Define by $w_{j,z}$ the solution of the equation
$$\left\{\begin{aligned}
 &\det D^2 w_{j,z}=f(z)\quad \text{ in }S_{4^{-j}}[u](z),\\
 &w_{j,z}=4^{-j}+u(z)+<Du(z),\cdot-z>\quad \text{ on }\partial S_{4^{-j}}[u](z).
\end{aligned}\right.$$
for sufficiently big $j$. Then by (3.4) in~\cite{jw},
\begin{equation}\label{es--z1}
 |Dw_{j,z}(z)-Du(z)|\le C\sum_{s=j}^{+\infty}2^{-s}\Lambda_s\nu_s
\end{equation}
where $\Lambda_s$ is the maximum eigenvalue of $T_s$ which normalizes $S_{4^{-s}}(z)$, and $\nu_s=\sup_{S_{4^{-s}(z)}}|f-f(z)|$. By~\eqref{es-z3},
\begin{equation}\label{es--z2}
 |Dw_{k+l_0,z}(z)-Dw_k(z)|\le C\Lambda_{k+l_0}2^{-k-l_0}(\nu_k+\delta_k+C_*(4^{-k})^{1/2-10\epsilon})
\end{equation}
By (3.8)-(3.9) in~\cite{jw} we have
$$\Lambda_i\le e^{\theta\sum_{j=0}^i}\nu_j$$
Recalling the definition of $\delta_k$ and $\nu_k$, if we let
$$\nu(t)=\sup_{x\in \overline{\Omega}}\sup_{S^c_{t^2}[u](x)}|f-f(x)|$$
then $\delta_k+\nu_k\le \nu(C2^{-k})$. This fact together with~\eqref{es--z1} and~\eqref{es--z2} shows that $|Dw_k(z)-Du(z)|$ for $z\in\Omega$, as well as $|Du(z)-Du(0)|$, admits the same estimate as before.

In all, we have proved that
\begin{equation}\label{esd}
 |Du(z)-Du(0)|\le Ch(|z|)
\end{equation}
for $h(d)=d(1+e^{-2\theta\psi(d)})$ when $dist(z,\partial\Omega)=|z|$.

Claim that this actually yields~\eqref{es-1} for $x,y\in\overline{\Omega}$. First note that $dist(z,\partial\Omega)\le |z|$ for any $z\in\Omega\cap B_1(0)$. For $h$ defined as before, $h$ is monotonically increasing, $h(Cd)\le Ch(d)$, and $h$ is concave for sufficiently small $d$. To show this, we have
$$h'(d)=1+(1-2\theta w_{\log f}(d))e^{-2\theta\psi(d)},$$
which is positive when $d$ is small. One can also verify its concavity by direct computation. Hence for any $z\in D_{1/4}[u](0)\cap \Omega$, denoting by $\hat{z}$ the point on $\partial \Omega$ that has the least distance with $z$, we have
$$\begin{aligned}
|Du(z)-Du(0)|&\le |Du(z)-Du(\hat{z})|+|Du(\hat{z})-Du(0)|\\
&\le Ch(|z-\hat{z}|)+Ch(|\hat{z}|)\\
&\le Ch(\frac{|z-\hat{z}|+|\hat{z}|}{2})\\
&\le Ch(\frac{3}{2}|z|)\le Ch(|z|)
\end{aligned}$$
by~\eqref{esd},~\eqref{fm0} and properties of $h$. Thus we have prove Theorem~\ref{thm3} for $x\in\partial\Omega$ and $y\in\Omega$.

By the triangle inequality and the results in Steps 1 and 2,~\eqref{es-1} is also true for $x,y\in\Omega$. Therefore the proof for Theorem~\ref{thm3} is now complete.

\vskip 2cm

\end{document}